\documentclass[12pt,a4paper]{amsart}
\usepackage[pagewise]{lineno}
\usepackage[top=35mm, bottom=32mm, left=30mm, right=30mm]{geometry}
\usepackage{newtxtext,newtxmath}

\usepackage{mathrsfs}
\usepackage{fancyhdr} 
\usepackage{url}
\usepackage[all]{xy}
\usepackage[colorlinks=
true
, linkcolor=red, urlcolor=red, citecolor=red, linktoc=page,citecolor=blue]{hyperref}
\usepackage{enumitem}

\newtheorem{theorem}{Theorem}[section]
\newtheorem{proposition}[theorem]{Proposition}
\newtheorem{lemma}[theorem]{Lemma}
\newtheorem{corollary}[theorem]{Corollary}

\newtheorem{definition}[theorem]{Definition}

\theoremstyle{definition}

\newcommand{\mdim}{\operatorname{mdim}}
\newcommand{\Map}{\operatorname{Map}}
\newcommand{\Sym}{\operatorname{Sym}}
\newcommand{\Fix}{\operatorname{Fix}}

\newcommand{\ind}{\alpha}

\newcommand{\cG}{\mathcal G}
\newcommand{\cU}{\mathcal U}
\newcommand{\cV}{\mathcal V}

\newcommand{\N}{\mathbb N}
\newcommand{\Z}{\mathbb Z}

\title{A Hard-Core Subshift Whose Sofic Mean Dimension Depends on the Sofic Approximation}
\author{Xianqiang Li and Zhuowei Liu *}
\date{\today}
\subjclass[2020]{37B02, 54E45}
\address[X. Li]{Shanghai center for Mathematial Science, Fudan University, Shanghai,  200000, P.R. China}
\email{26110840010@m.fudan.edu.cn}
\address[Z. Liu]{School of Mathematics (Zhuhai), Sun Yat-sen University,
	Zhuhai, Guangdong, 519000, P.R. China}
\email{liuzhw55@mail2.sysu.edu.cn}

\keywords{Sofic group; mean dimension}
\thanks{* Zhuowei Liu is the corresponding author.}
\begin{document}
	\maketitle
	
	\begin{abstract}
  		We exhibit a topologically mixing continuous-alphabet subshift of the free group $F_2$ whose sofic mean dimension depends on the chosen homomorphic sofic approximation, which gives an answer of Li \cite[Remark 2.7]{Li13}.  The system is the hard-core subshift
		\[
		X_{\rm hc}=\{x\in [0,1]^{F_2}:x_gx_{gs}=0\text{ for every }g\in F_2\text{ and }s\in\{a,b\}\}.
		\]
		
		We construct one sofic approximation from finite quotients compatible with the parity homomorphism $F_2\to\Z/2\Z$; all of its action graphs are bipartite and give sofic mean dimension exactly $1/2$.  A second approximation is selected from two independent uniform random permutations and gives a value in $[1/5,9/20]$.  Consequently a mixing action can have two distinct positive sofic mean dimensions.
	\end{abstract}
	
	\section{Introduction}
	The mean dimension of a continuous map $T : X \to X$ on a compact metrizable space $X$ was introduced by Gromov~\cite{MG}. 
It was subsequently developed for action of amenable groups via F\o lner sequences by Lindenstrauss and
Weiss ~\cite{LW00} and extended to certain non-amenable groups by Li~\cite{Li13} via sofic approximations in recent years. 
It has since become a fundamental invariant in the study of infinite-dimensional dynamical systems, particularly in embedding problems and the geometric theory of topological dynamical system. 

If a sofic group $G$ acts by homeomorphisms on a compact metrizable space $X$ and a sofic approximation $\Sigma$ to $G$ is given, then the mean dimension of the action is a topological conjugacy invariant, denoted by $\mathrm{mdim}_\Sigma(X) \in \{-\infty\}\cup[0, \infty]$.
It is also called sofic mean dimension if $\Sigma$ is understood.
It was pioneered in~\cite{Li13} and coincides with the classical mean dimension when the group is amenable. Its definition depends formally on a chosen sofic approximation sequence.  The author of ~\cite{Li13} observed that no example was known in which changing the approximation changes the value (see also \cite[Remark 2.21]{GRFYG}).  More recently, Jin and Qiao proved approximation-independence for all amenable group actions and for full shifts over arbitrary compact metrizable alphabets \cite{JQ24}.  Thus any dependence phenomenon must use a genuinely nonamenable action and a subsystem more constrained than a full shift. The goal of this paper is to construct an example demonstrating that the sofic mean dimension depends on the choice of the sofic approximation sequences.
	
	For sofic entropy, Airey, Bowen and Lin \cite{ABL22} constructed a mixing subshift of finite type with two different positive sofic entropies.  Their proof relies on a comparison between a uniform random model and a planted model, showing that an exponential discrepancy in the number of proper colorings yields a corresponding difference in entropy. Our construction adopts the same strategy. However, the underlying finite statistic differs: for mean dimension, the relevant quantity is not the number of colorings but the count of continuously free coordinates. In the hard-core model, these coordinates correspond precisely to the independent vertices of the finite action graph.
	
	Let
	\[
	F_r=\langle s_1,\ldots,s_r\rangle,
	\qquad S=\{s_1,\ldots,s_r\},
	\]
	and consider
	\begin{equation}\label{eq:hardcore}
		X_{\rm hc}=\left\{x\in[0,1]^{F_r}:x_gx_{gs}=0
		\text{ for all }g\in F_r,\ s\in S\right\}.
	\end{equation}
	The group acts by the left shift
	\[
	(hx)_g=x_{h^{-1}g}.
	\]
	The symbol $0$ is safe: arbitrary legal patterns on sufficiently separated finite sets can be joined by filling all remaining coordinates with $0$.  In particular, the action is topologically mixing (see Proposition~\ref{prop:mixing}). Prior research focuses on the discrete configuration space $\{0,1\}^G$, referred to as the hard-ball model, which possesses the pseudo-orbit tracing property (see \cite[Example 6.6]{BGL22} and \cite[Example 4.1]{CCL}).
	
	Suppose that
	\[
	\Sigma=(\sigma_n:F_r\to\Sym(V_n))_{n\ge 1}
	\]
	is a sofic approximation consisting of homomorphisms.  The action graph $\cG_{\sigma_n}$ has vertex set $V_n$ and undirected edges
	\[
	\{v,\sigma_n(s_j)v\},\qquad v\in V_n,\quad 1\le j\le r.
	\]
	Loops and multiple edges are allowed. 	A set $I\subseteq V_n$ is independent if it contains no two endpoints of an edge and contains no looped vertex.  Write $\ind(\cG_{\sigma_{n}})$ for the independence number.
	
	Our first theorem computes both sofic mean dimension and sofic metric mean dimension.
	
	\begin{theorem}[Model-dimension formula]\label{thm:model-formula}
		Let $\Sigma=(\sigma_n:F_r\to\Sym(V_n))_{n\ge 1}$ be a homomorphic sofic approximation.  For the hard-core subshift \eqref{eq:hardcore}, set
		\[
		q_\Sigma=\limsup_{n\to\infty}
		\frac{\ind(\cG_{\sigma_n})}{|V_n|}.
		\]
		Let $\rho_0(x,y)=|x_e-y_e|$.  Then
		\[
		\mdim_\Sigma(X_{\rm hc})
		=\underline{\mdim}_{\Sigma,\mathrm M}(X_{\rm hc},\rho_0)
		=\overline{\mdim}_{\Sigma,\mathrm M}(X_{\rm hc},\rho_0)
		=q_\Sigma.
		\]
		Thus the lower and upper sofic metric  mean dimensions coincide for the dynamically generating continuous pseudometric $\rho_0$.
	\end{theorem}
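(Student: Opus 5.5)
The plan is to prove the upper bound $\overline{\mdim}_{\Sigma,\mathrm M}(X_{\rm hc},\rho_0)\le q_\Sigma$ and the lower bound $\mdim_\Sigma(X_{\rm hc})\ge q_\Sigma$. We then close the chain with the general inequalities $\mdim_\Sigma\le\underline{\mdim}_{\Sigma,\mathrm M}\le\overline{\mdim}_{\Sigma,\mathrm M}$, which are due to Li. Throughout we use Li's definitions with microstate spaces $\Map(\rho_0,F,\delta,\sigma_n)$ of maps $\varphi:V_n\to X_{\rm hc}$. Since $\sigma_n$ is a homomorphism, $\sigma_n(s)$ acts exactly, so approximate equivariance can be tested coordinatewise through $\rho_0$. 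The key object attached to a microstate $\varphi$ is the coordinate vector $y_v=\varphi(v)_e\in[0,1]$. For $\varphi$ approximately equivariant on $F\supseteq S$, we have $\varphi(\sigma_n(s)v)_e\approx\varphi(v)_{s^{-1}}$ for most $v$ in an $\ell^2$-averaged sense. Hence $y_vy_{\sigma_n(s)v}$ is small for most edges of $\cG_{\sigma_n}$.

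\emph{Lower bound.} Fix a maximum independent set $I_n\subseteq V_n$. For each $t\in[0,1]^{I_n}$, define the configuration $z^t\in[0,1]^{V_n}$ by $z^t_v=t_v$ on $I_n$ and $0$ elsewhere. It is a legal hard-core pattern on the finite graph, since loops are excluded from $I_n$. Set $\varphi_t(v)_g=z^t_{\sigma_n(g)^{-1}v}$. Because $\sigma_n$ is a homomorphism and every edge $\{g,gs\}$ of the Cayley graph maps to an edge of $\cG_{\sigma_n}$, each $\varphi_t(v)$ lies in $X_{\rm hc}$. The map $\varphi_t$ is also exactly equivariant, so $t\mapsto\varphi_t$ is a continuous embedding of the cube $[0,1]^{I_n}$ into $\Map(\rho_0,F,\delta,\sigma_n)$. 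Projecting to coordinate $e$ recovers $t$, so for $\varepsilon<1$ the standard cube argument applies. Any $(\rho_{0,\infty},\varepsilon)$-embedding into a polyhedron, or any $\varepsilon$-compatible open cover, needs dimension at least $|I_n|$, by Lebesgue covering/Lebesgue lemma on $[0,1]^{I_n}$ with the sup metric. This gives $\mdim_\Sigma\ge\limsup|I_n|/|V_n|=q_\Sigma$. We pass along a subsequence realizing the limsup, which is legitimate because Li's definition uses $\limsup_n$.

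\emph{Upper bound.} Fix $\varepsilon>0$ and take a microstate $\varphi$. Let $B_\varphi=\{v: y_v\ge\varepsilon\}$. From the approximate equivariance, the number of edges of $\cG_{\sigma_n}$ inside $B_\varphi$ is at most $\eta|V_n|$, with $\eta\to0$ as $\delta\to0$ for fixed $\varepsilon$ (Markov/Chebyshev applied to the $\ell^2$ error). Deleting one endpoint per bad edge and all looped vertices leaves an independent set. Hence $|B_\varphi|\le \ind(\cG_{\sigma_n})+\eta|V_n|$. To estimate the $\varepsilon$-covering number of $\Map$ in $\rho_{0,\infty}$ (or $\rho_{0,2}$), we first choose the set $B\subseteq V_n$, which contributes at most $2^{|V_n|H(\cdot)}$ choices. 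The entropy of that choice is only $O(|V_n|)$, and it is dominated since metric mean dimension divides $\log N_\varepsilon$ by $|V_n|\log(1/\varepsilon)$. We then discretize $y$ on $B$ at scale $\varepsilon$ and set it to $0$ off $B$. This yields
\[
\log N_\varepsilon\le (q+\eta)|V_n|\log(2/\varepsilon)+C|V_n|.
\]
We have to handle the subtlety that the covering number of $\Map$ involves all coordinates $g$, not just $e$. However, $\rho_0$ is dynamically generating, and for approximately equivariant $\varphi$ the distance $\rho_{0,2}(\varphi,\psi)$ is controlled by comparing $y$-vectors. We therefore work directly with Li's definition via the pseudometric $\rho_{0,\infty}$ on $V_n\to X$, which depends only on the coordinate-$e$ vectors. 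Dividing by $|V_n|\log(1/\varepsilon)$ and letting $\delta\to0$, then $\varepsilon\to0$, gives $\overline{\mdim}_{\Sigma,\mathrm M}\le q_\Sigma$.

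The main obstacle is the upper bound bookkeeping. Approximate equivariance is only in an averaged $\rho_{0,2}$ sense over $F$, so the bound on bad edges must come from an averaging estimate rather than pointwise. The $O(|V_n|)$ combinatorial entropy from choosing $B$ must be shown negligible against $\log(1/\varepsilon)$. The rest is the cube embedding and the general inequality chain.
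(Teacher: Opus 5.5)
Your proposal is correct and follows the paper's proof essentially step for step. For the lower bound, exact equivariant cube microstates supported on a maximum independent set give $\mdim_\Sigma(X_{\rm hc})\ge q_\Sigma$. For the upper bound, the threshold set with a Chebyshev bound on internal edges, deletion down to an independent set, and counting via $2^{|V_n|}$ choices of the set times quantization give $\overline{\mdim}_{\Sigma,\mathrm M}(X_{\rm hc},\rho_0)\le q_\Sigma$, and Li's comparison inequalities close the chain. The remaining differences are cosmetic: you test equivariance on $S$ rather than $S^{-1}$, threshold at $\varepsilon$ rather than $\varepsilon/4$, and discard looped vertices using soficity rather than counting them as bad edges.
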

	
	The exact lower bound is encoded by a maximum independent set $I_n$.  Every vector in $[0,1]^{I_n}$ defines an exact equivariant microstate by placing the vector on $I_n$, placing $0$ elsewhere, and taking pullback names.  This produces a cube of dimension $|I_n|$ inside every microstate space.
	
	For the upper bound, if $\varphi:V_n\to X_{\rm hc}$ is approximately equivariant and $y_v=\varphi(v)_e$, then for every threshold $\tau>0$ the set
	\[
	A_\tau(\varphi)=\{v:y_v\ge \tau\}
	\]
	contains only a controlled number of internal generator edges.  Deleting at most $O(\delta^2\tau^{-2}|V_n|)$ vertices makes it independent.  Hence only $\ind(\cG_{\sigma_n})+o(|V_n|)$ coordinates can carry values above $\tau$. Quantization of these coordinates and enumeration of the threshold set provide the necessary separated-set bound, avoiding a subtle direct covering-dimension analysis of approximate microstates. 
	
	The second theorem supplies two sofic approximation sequences.
	
	\begin{theorem}[Approximation dependence]\label{thm:main}
		There exist two homomorphic sofic approximation sequences $\Sigma_{\rm unif}$ and $\Sigma_{\rm bip}$ of $F_2$ such that
		\[
		\frac15
		\le \mdim_{\Sigma_{\rm unif}}(X_{\rm hc})
		\le \frac9{20}
		<\frac12
		=\mdim_{\Sigma_{\rm bip}}(X_{\rm hc}).
		\]
		In particular, the sofic mean dimension of a topologically mixing action can depend on the chosen sofic approximation, and both values can be positive and finite.
	\end{theorem}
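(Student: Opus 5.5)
By Theorem~\ref{thm:model-formula}, the whole problem reduces to finite combinatorics. For any homomorphic sofic approximation $\Sigma$ we have $\mdim_\Sigma(X_{\rm hc})=q_\Sigma$. So it suffices to produce two homomorphic sofic approximations of $F_2$: one whose action graphs have independence ratio tending to $1/2$, and one whose independence ratios have $\limsup$ in $[1/5,9/20]$. Topological mixing is already Proposition~\ref{prop:mixing}, and positivity and finiteness follow from the bounds themselves.

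For $\Sigma_{\rm bip}$, I will start from a residual sequence of finite-index normal subgroups $N_k\trianglelefteq F_2$ with $\bigcap_k N_k=\{e\}$. Each $N_k$ is chosen inside the kernel $K$ of the parity map $F_2\to\Z/2\Z$, for example $N_k=K\cap M_k$ for any residual chain $M_k$. The left (or right) translation actions $\sigma_k:F_2\to\Sym(F_2/N_k)$ are homomorphisms. They are free on increasingly large balls, hence form a sofic approximation. Since $N_k\le K$, parity descends to a $2$-colouring of $V_k=F_2/N_k$, and every generator moves each vertex to the opposite colour class. Hence $\cG_{\sigma_k}$ is bipartite with no loops. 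Each class has exactly $|V_k|/2$ vertices, since the classes are the two cosets of $K/N_k$, so $\ind\ge|V_k|/2$. Conversely, the graph is $4$-regular (counting multiplicity), and in a $d$-regular graph any independent set $I$ satisfies $d|I|\le$ the number of edges $=d|V|/2$. This gives $q_{\Sigma_{\rm bip}}=1/2$.

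For $\Sigma_{\rm unif}$, I will take $\sigma_n(a),\sigma_n(b)$ to be independent uniform permutations of $[n]$. It is standard that almost surely, or at least along a subsequence with positive probability, these form a sofic approximation: the expected number of fixed points of $\sigma_n(w)$ for a nontrivial reduced word $w$ is $O(1)$, and a second moment or Borel--Cantelli argument along a subsequence applies. The resulting graph is a random $4$-regular graph in the permutation model. The lower bound $1/5$ follows from a deterministic greedy argument: a $4$-regular multigraph has $\ind\ge |V|/(\Delta+1)$ once loops are discarded. The expected number of loops is $O(1)$, so removing looped vertices costs $o(n)$. For the upper bound I will use a first-moment computation. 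The expected number of independent sets of size $\beta n$ in the permutation model is
\[
\binom{n}{\beta n}\left(\frac{\binom{n-\beta n}{\beta n}}{\binom{n}{\beta n}}\right)^{2},
\]
since each permutation must map the set into its complement. Its exponential rate is
\[
H(\beta)+2\bigl[(1-\beta)H\bigl(\tfrac{\beta}{1-\beta}\bigr)-H(\beta)\bigr],
\]
and I will check that this is negative at $\beta=9/20$. If the margin is adequate, Markov and Borel--Cantelli along the chosen subsequence then show that $\ind<\tfrac{9}{20}n$ eventually. Intersecting the two almost-sure or positive-probability events yields a deterministic realization $\Sigma_{\rm unif}$ with $\limsup\ind/|V_n|\in[1/5,9/20]$.

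The main obstacle is this first-moment estimate. It requires correctly computing the probability that two independent permutations both avoid the set $I$ (it factorizes as shown), and then verifying numerically that the exponent is strictly negative at $\beta=9/20$. If the margin there is too thin, I would instead pass to a slightly larger constant, still below $1/2$, or add a union bound over near-independent sets. A secondary point is to choose the almost-sure subsequence simultaneously for soficity and for the independence bound, which a countable intersection of full-measure events handles.
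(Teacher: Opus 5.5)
Your proposal is correct and follows essentially the paper's route. You reduce to $q_\Sigma$ via Theorem~\ref{thm:model-formula}, use parity-compatible residual quotients for $\Sigma_{\rm bip}$, and for $\Sigma_{\rm unif}$ combine a first-moment bound with the greedy bound after discarding looped vertices. Your exponent simplifies exactly to the paper's $\Phi_2(\beta)$, and the numerical check you defer does succeed ($\Phi_2(9/20)\approx-0.1666$), so no fallback constant is needed.

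The only differences are cosmetic. You obtain $\ind(\cG_{\sigma_k})\le|V_k|/2$ by double-counting edges in a loopless $4$-regular multigraph, where the paper uses the perfect matching formed by the $a$-edges. You fix a realization via Borel--Cantelli along a sparse subsequence, where the paper makes the simpler diagonal choice of one good realization for each $n$.
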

	To the best of our knowledge, Theorem \ref{thm:main} gives the first example of two distinct nonnegative, indeed positive, values of sofic mean dimension for one action.
    
	The bipartite approximation is obtained from finite quotients lying in the kernel of the parity map $F_2\to\Z/2\Z$.  One parity class is an independent set, while the edges of one generator contain a perfect matching, so the independence ratio is exactly $1/2$.
	
	For the uniform approximation, take two independent uniform random permutations.  For a fixed set $I$ of density $q\le 1/2$, the probability that a uniform permutation sends $I$ disjointly from itself is
	\[
	\frac{((1-q)n)!^2}{((1-2q)n)!n!}
	\]
	when the displayed quantities are integral.  The first moment for independent sets has exponential rate
	\begin{equation}\label{eq:Phi-intro}
		\Phi_2(q)=H(q)+2\bigl(2(1-q)\log(1-q)
		-(1-2q)\log(1-2q)\bigr),
	\end{equation}
	where $H(q)=-q\log q-(1-q)\log(1-q)$.  Since $\Phi_2(9/20)<-0.16$, the probability of an independent set of density at least $9/20$ tends to zero exponentially.  At the same time, random permutation models are asymptotically free, so a deterministic diagonal choice gives a sofic approximation satisfying the independence bound.
	
    The paper is organized as follows. In Section \ref{sec2}, we recall some definitions and some related results. In Section \ref{sec3}, we show that the hard-core system is topologically mixing and prove Theorem~\ref{thm:model-formula}. In Section \ref{sec4}, we construst a bipartite sofic approximation and show that the sofic mean dimension of the hard-core system with respect to this sofic approximation is $\frac{1}{2}$. In Section \ref{sec5}, we prove Theorem~\ref{thm:main}.
    
\emph{Acknowledgments.} We would like to thank Professor Siming Tu, who provided much guidance and assistance.
	
	\section{Preliminaries}\label{sec2}
	
	\subsection{Sofic approximations}
	
	Let $G$ be a countable group with identity $e$ and $\sigma: G \to \Sym(V)$ be a map where $V$ is a finite set and $\Sym(V)$ is the group of permutations of $V$.  A \emph{sofic approximation sequence} is a sequence
	\[
	\Sigma=(\sigma_n:G\to\Sym(V_n))_{n\ge1},
	\qquad |V_n|\to\infty,
	\]
	which is asymptotically multiplicative and asymptotically free in the sense that
  \begin{itemize}
      \item [(i)] For all $s,t \in G$, one has
      $$\lim_{n \rightarrow \infty}\frac{|\{v \in V_n:\sigma_{n}(st)v=\sigma_{n}(s)v\sigma_{n}(t)v\}|}{|V_{n}|}=1;$$
      \item [(ii)] for any distinct $s,t \in G$, one has $$\lim_{n \rightarrow \infty}\frac{|\{v \in V_n:\sigma_{n}(s)v=\sigma_{n}(t)v\}}{|V_n|}=0.$$
  \end{itemize}
  A group is \emph{sofic} if it admits a sofic approximation.
   In this paper, every $\sigma_n$ is a homomorphism, so only asymptotic freeness remains to be checked: 
	\begin{equation}\label{eq:sofic-free}
		\frac{|\Fix(\sigma_n(g))|}{|V_n|}\longrightarrow0
		\qquad(g\ne e),
	\end{equation}
	where $\Fix(\sigma_n(g)):=\{v\in V_n:\sigma_n(g)v=v\}$.
    \subsection{Sofic mean dimension and sofic metric mean dimension}
	Let $G\curvearrowright X$ be a continuous action on a compact metrizable space, and let $\rho$ be a continuous pseudometric on $X$.  For maps $\varphi,\psi:V\to X$, set
	\[
	\rho_2(\varphi,\psi)
	=\left(\frac1{|V|}\sum_{v\in V}\rho(\varphi(v),\psi(v))^2\right)^{\frac{1}{2}},
	\qquad
	\rho_\infty(\varphi,\psi)
	=\max_{v\in V}\rho(\varphi(v),\psi(v)).
	\]
	For a finite nonempty $F\subseteq G$, $\delta>0$, and $\sigma:G\to\Sym(V)$, define
	\[
	\Map(\rho,F,\delta,\sigma)
	=\left\{\varphi:V\to X:
	\rho_2(\varphi\circ\sigma(s),s\varphi)\leq \delta
	\text{ for all }s\in F\right\}.
	\]

For two finite open covers $\alpha$ and $\beta$ of $X$, we say that $\beta$ \emph{refines} $\alpha$, defined $\beta \succ \alpha$, if every member of $\beta$ is contained in a member of $\alpha$.
Denote
	\[
	\mathrm{ord}(\alpha) = \left( \max_{x \in X} \sum_{U \in \alpha} 1_U(x) \right) - 1\text{,~~}
	D(\alpha) = \min_{\beta \succ \alpha} \mathrm{ord}(\beta),
	\]
	where the minimum is taken over all finite open covers $\beta$ of $X$ such that $\beta$  refines $\alpha$.
    For a finite open cover $\cU$ of $X$, let $\cU^V$ be the product cover of $X^V$, and write
	\[
	D(\cU,\rho,F,\delta,\sigma)
	=D\left(\cU^V|_{\Map(\rho,F,\delta,\sigma)}\right).
	\]
	For a sofic approximation sequence $\Sigma$,  Li \cite{Li13} defined the \emph{sofic mean dimension} with respect to $\Sigma$ by
	\[
	\mdim_\Sigma(X)
	=\sup_{\cU}\inf_F\inf_{\delta>0}
	\limsup_{n\to\infty}
	\frac{D(\cU,\rho,F,\delta,\sigma_n)}{|V_n|},
	\]
	where $\cU$ ranges over finite open covers of $X$ and $F$ in the first infimum is over all finite sets $F \subseteq G$. The value is independent of the compatible metric on $X$.
    
	For a pseudometric space $(Y,\rho')$ and $\varepsilon>0$, a subset $E\subseteq Y$ is called $(\rho',\varepsilon)$-separated if for every distinct $x,y\in Y$,  $\rho'(x,y)>\varepsilon$. We write
$N_\varepsilon(Y,\rho')$
for the largest cardinality of an $(\rho',\varepsilon)$-separated subset of $Y$.

	 Put
	\[
	h^\varepsilon_{\Sigma,\infty}(\rho,F,\delta)
	=\limsup_{n\to\infty}\frac1{|V_n|}
	\log N_\varepsilon\bigl(\Map(\rho,F,\delta,\sigma_n),\rho_\infty\bigr),
	\]
    \[h^\varepsilon_{\Sigma,\infty}(\rho)=\inf_{F}\inf_{\delta>0}h^\varepsilon_{\Sigma,\infty}(\rho,F,\delta),
    \]
	where take the infimum over $\delta>0$ and all finite sets $F \subseteq G$.  The \emph{lower and upper sofic metric mean dimensions} with respect to $\Sigma$ and $\rho$ are
	\[
	\underline{\mdim}_{\Sigma,\mathrm M}(X,\rho)
	=\liminf_{\varepsilon\downarrow0}
	\frac{h^\varepsilon_{\Sigma,\infty}(\rho)}{|\log\varepsilon|},
	\qquad
	\overline{\mdim}_{\Sigma,\mathrm M}(X,\rho)
	=\limsup_{\varepsilon\downarrow0}
	\frac{h^\varepsilon_{\Sigma,\infty}(\rho)}{|\log\varepsilon|}.
	\]
	We also define
	\[
	\mdim_{\Sigma,\mathrm M}(X)
	=\inf_{\rho}\underline{\mdim}_{\Sigma,\mathrm M}(X,\rho),
	\]
	where $\rho$ ranges over all compatible metrics on $X$.
	
	A continuous pseudometric $\rho$ is dynamically generating if for every $x\ne y$ there exists $g\in G$ with $\rho(gx,gy)>0$.  Li proved that
	\begin{equation}\label{eq:metric-inf-dyn}
		\mdim_{\Sigma,\mathrm M}(X)
		=\inf_{\rho}\underline{\mdim}_{\Sigma,\mathrm M}(X,\rho),
	\end{equation}
	where the infimum on the right ranges over dynamically generating continuous pseudometrics on $X$, and also that
	\begin{equation}\label{eq:comparison}
		\mdim_\Sigma(X)\le \mdim_{\Sigma,\mathrm M}(X).
	\end{equation}
	See \cite[Lemma 4.4, Proposition 4.5 and Section 6]{Li13}.
    
    We use the following standard covering lemma.
	
	\begin{lemma}[Essential cube cover]\label{lem:cube-cover}
		Let
		\[
		\cV=\{[0,2/3),\ (1/3,1]\}
		\]
		be regarded as a relatively open cover of $[0,1]$.  Then
		\[
		D(\cV^{m})=m
		\qquad(m\ge1).
		\]
	\end{lemma}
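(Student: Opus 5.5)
We need to prove $D(\cV^m)=m$ for the cover $\cV^m$ of the cube $[0,1]^m$, whose members are the $2^m$ products $U_{\epsilon}=\prod_{i=1}^m U_{\epsilon_i}$ with $U_0=[0,2/3)$ and $U_1=(1/3,1]$. The plan is to prove the two inequalities separately: the upper bound by exhibiting a refinement of order $m$, and the lower bound by a Lebesgue covering (or KKM-type) argument applied to any refinement.

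For the upper bound $D(\cV^m)\le m$, the cleanest route is that $[0,1]^m$ has covering dimension $m$. Every finite open cover of it therefore has a finite open refinement of order at most $m$. Intersecting such a refinement with $\cV^m$ yields a cover that still refines $\cV^m$ and still has order at most $m$. If one prefers to be explicit, one can instead take a fine triangulation of the cube subordinate to $\cV^m$ and use the open stars of its vertices; these have order exactly $m$.

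For the lower bound $D(\cV^m)\ge m$, let $\beta\succ\cV^m$ be any finite open cover. The key observation is that no member $B\in\beta$ meets two opposite faces $\{x_i=0\}$ and $\{x_i=1\}$. Indeed, $B$ lies in some $U_\epsilon$, and the $i$-th factor $U_{\epsilon_i}$ contains only one of the values $0$ and $1$. So $\beta$ is a finite open cover of $[0,1]^m$ in which no member meets a pair of opposite faces. The Lebesgue covering theorem then says that some point lies in at least $m+1$ members of $\beta$, i.e.\ $\mathrm{ord}(\beta)\ge m$.

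If we want a self-contained argument for this step, we proceed as follows.
\begin{itemize}
\item For each $i$, let $A_i$ be the union of the members of $\beta$ that meet $\{x_i=0\}$. We assign each member of $\beta$ to the least $i$ for which it meets $\{x_i=0\}$, and to an extra class $A_0$ if it meets no such face.
\item Using a partition of unity subordinate to $\beta$, we build a continuous map $f:[0,1]^m\to\Delta^m$ into the nerve.
\item We check that $f$ sends each face $\{x_i=0\}$ into the face of $\Delta^m$ opposite vertex $i$, in a way compatible with Brouwer/KKM.
\item We conclude that some point has all $m+1$ coordinates of $f$ positive, which forces $m+1$ overlapping members.
\end{itemize}
The standard KKM lemma in the cube form (the Lebesgue theorem) packages exactly this, so I would simply cite it.

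The only delicate point is relative openness: the members of $\cV$ are relatively open in $[0,1]$ but contain endpoints. This is harmless, since the Lebesgue theorem is stated for covers of the closed cube by relatively open (or even closed) sets. The main obstacle, such as it is, is therefore the lower bound; it reduces to citing the Lebesgue covering theorem after the face observation. The case $m\ge1$ needs no special treatment.
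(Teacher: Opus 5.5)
Your proposal is correct and is essentially the paper's proof. The paper gets the upper bound from $\dim([0,1]^m)=m$, and the lower bound from the observation that every member of a refinement of $\cV^m$ misses one of $\{x_i=0\}$, $\{x_i=1\}$ for each $i$, followed by the Lebesgue covering theorem, which it cites rather than reproves.

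There are two harmless slips, both in the optional parts. In your self-contained sketch, $f_i$ vanishes on $\{x_i=1\}$, not on $\{x_i=0\}$, because members of class $i$ meet $\{x_i=0\}$. In the upper bound, you should not intersect the refinement with $\cV^m$: it already refines $\cV^m$, and taking pairwise intersections can raise the order.
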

	
	\begin{proof}
		The upper bound follows from $\dim([0,1]^m)=m$.  For the lower bound, every member of the product cover misses at least one of the two opposite faces
		\[
		F_i^0=\{x_i=0\},\qquad F_i^1=\{x_i=1\}
		\]
		for each $i$. By \cite[Lemma 3.2]{LW00} (see also \cite[Theorem IV.2]{HW}), we know that $D(\cV^m)\ge m$.
	\end{proof}
	
	\subsection{Action graphs and Cayley graphs}
	\begin{definition}[Action graph]
	Let $\sigma:F_r\to\Sym(V)$ be a homomorphism.  Define the undirected multigraph $\cG_\sigma$ by

\begin{itemize}
    \item[(1)] Vertex set: $V(\cG_\sigma)=V$;
    \item[(2)] Edge set: 
    \[
    E(\cG_\sigma)=\{\{v,\sigma(s_j)v\}:v\in V,\ 1\le j\le r\}.
    \]
\end{itemize}

	A set $I\subseteq V$ is \emph{independent} if it contains no two endpoints of an edge and contains no looped vertex.  Its maximum size is $\ind(\cG_\sigma)$.  Multiple edges do not affect independence.
    \end{definition}
	\begin{definition}[Cayley graph]
Let $G$ be a group and let $S \subseteq G$ be a generating set of $G$. The \emph{Cayley graph} of $G$ with respect to $S$ is the graph $\operatorname{Cay}(G, S)$ defined by:
\begin{itemize}
    \item[(1)] Vertex set: $G$;
    \item[(2)] Edge set: 
    \[
    \bigl\{\{g, g s\} : g \in G,\; s \in (S \cup S^{-1}) \setminus \{e\}\bigr\}.
    \]
\end{itemize}
\end{definition}
	
	\section{The properties of the hard-core system}\label{sec3}	\subsection{The hard-core system is topologically mixing}
	
	We now fix $G=F_r$ and the hard-core subshift $X_{\rm hc}$ from \eqref{eq:hardcore}.
	
	Recall that an action $G\curvearrowright X$ is topologically mixing if, for every pair of nonempty open sets $U,V\subseteq X$, there is a finite set $K\subseteq G$ such that $gU\cap V\ne\varnothing$ for every $g\notin K$.
	
	\begin{proposition}\label{prop:mixing}
		The action $F_r\curvearrowright X_{\rm hc}$ is topologically mixing.  More precisely, it has safe symbol $0$: any two legal finite patterns whose supports have graph distance at least two can be combined and extended by $0$ outside their domains.
	\end{proposition}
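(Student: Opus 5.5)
I would first establish the safe-symbol statement, then derive mixing from it. Fix the Cayley graph $\operatorname{Cay}(F_r,S)$ with its graph distance $d$. A \emph{legal pattern} on a finite set $A\subseteq F_r$ is a map $p:A\to[0,1]$ such that $p_gp_{gs}=0$ whenever $g,gs\in A$ and $s\in S$. Let $p$ on $A$ and $p'$ on $B$ be legal, with $d(A,B)\ge 2$. Define $x\in[0,1]^{F_r}$ by $x|_A=p$, $x|_B=p'$, and $x_g=0$ otherwise.

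To check $x\in X_{\rm hc}$, take an edge $\{g,gs\}$. If either endpoint lies outside $A\cup B$, one factor is $0$. If both endpoints lie in $A$, the product vanishes by legality of $p$, and likewise for $B$. The two endpoints cannot lie one in $A$ and one in $B$, because that would give $d(A,B)\le 1$. This settles the combination claim. The same argument with one pattern shows that every legal pattern extends by $0$ to a point of $X_{\rm hc}$.

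For mixing, let $U,V\subseteq X_{\rm hc}$ be nonempty open sets. Since $X_{\rm hc}$ carries the subspace product topology, I can shrink them to cylinder neighbourhoods. Pick $x\in U$ and $y\in V$, a finite set $K_0\subseteq F_r$ and $\varepsilon>0$ such that
\[
\{z\in X_{\rm hc}: |z_k-x_k|<\varepsilon\ \forall k\in K_0\}\subseteq U,
\]
and similarly for $y$ and $V$. The patterns $x|_{K_0}$ and $y|_{K_0}$ are legal because $x,y\in X_{\rm hc}$.

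Using $(gz)_h=z_{g^{-1}h}$, a point $gz$ lies in $V$ if $z_{g^{-1}k}$ is within $\varepsilon$ of $y_k$ for $k\in K_0$. So I want $z\in X_{\rm hc}$ with $z|_{K_0}=x|_{K_0}$ and $z_{g^{-1}k}=y_k$ for all $k\in K_0$, that is, $z$ carries the translated pattern on $g^{-1}K_0$. Legality is preserved under left translation, since edges $\{h,hs\}$ are mapped to edges. The combination step therefore applies as soon as $d(K_0,g^{-1}K_0)\ge 2$. Then $z\in U$ and $gz\in V$, hence $gz\in gU\cap V$.

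It remains to check that only finitely many $g$ violate $d(K_0,g^{-1}K_0)\ge 2$. The graph distance is left-invariant, so $d(k,g^{-1}k')=|k^{-1}g^{-1}k'|$ in word length. The condition fails only when $g^{-1}\in K_0 B_1 K_0^{-1}$, where $B_1$ is the ball of radius $1$. This is a finite set, so the exceptional set $K=(K_0B_1K_0^{-1})^{-1}$ is finite. This bookkeeping about which side the translation acts on is the only point requiring care; nothing else seems to be an obstacle.
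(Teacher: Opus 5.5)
Your proposal is correct and follows essentially the same route as the paper: glue the two legal patterns, filling everything else with $0$, then apply this to a cylinder pattern on $K_0$ and its translate on $g^{-1}K_0$. The one difference is that you make explicit the finite exceptional set $K_0B_1K_0^{-1}$ (up to inversion), which the paper asserts without computing.
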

	
	\begin{proof}
		Let $p:A\to[0,1]$ and $q:B\to[0,1]$ be restrictions of points in $X_{\rm hc}$, where $A,B\subset F_r$ are finite.  Assume that $A\cap B=\varnothing$ and no Cayley edge has one endpoint in $A$ and the other in $B$.  Define $z$ by $z|_A=p$, $z|_B=q$, and $z_g=0$ elsewhere.  Every edge contained in $A$ or in $B$ satisfies the hard-core constraint because the corresponding pattern is legal.  Every other edge has at least one endpoint outside $A\cup B$, where $z$ is zero.  Hence $z\in X_{\rm hc}$. Next we show that the action $F_r\curvearrowright X_{\rm hc}$ is topologically mixing.
		
		For nonempty open sets $U,V$, choose nonempty cylinder sets $U'\subseteq U$ and $V'\subseteq V$ with finite defining domains $A$ and $B$.  Choose $x\in U'$ and $y\in V'$.  For a given $g\in F_r$, prescribe on $A$ the pattern $x|_A$ and on $g^{-1}B$ the translated pattern
		\[
		q_g(g^{-1}b)=y_b\qquad(b\in B).
		\]
		Both patterns are legal, since they are restrictions of $x$ and $g^{-1}y$, respectively.  Outside a finite set of $g\in F_r$, the sets $A$ and $g^{-1}B$ are disjoint and no Cayley edge joins them.  The preceding gluing then gives $z\in X_{\rm hc}$ satisfying $z\in U'$ and
		\[
		(gz)_b=z_{g^{-1}b}=y_b\qquad(b\in B),
		\]
		so $gz\in V'$.  Thus $gU\cap V\ne\varnothing$ for all $g$ outside a finite set. Then the action $F_r\curvearrowright X_{\rm hc}$ is topologically mixing.
	\end{proof}

	\subsection{Topological lower bound}
Let
	\[
	\rho_0(x,y)=|x_e-y_e|,\text{for any~}x=(x_{g})_{g \in F_r},y=(y_{g})_{g \in F_r}\in X_{hc}.
	\]
	Then $\rho_0$ is a dynamically generating pseudometric on $X_{hc}$, since shifts move every coordinate to the identity.
	The next construction is the source of all lower bounds.
	
	\begin{lemma}[Exact cube microstates]\label{lem:exact-cube}
		Let $\sigma:F_r\to\Sym(V)$ be a homomorphism and $I\subseteq V$ a independent set in $\cG_\sigma$.  For $z=(z_v)_{v\in I}\in[0,1]^I$, define $a_z:V\to[0,1]$ by
		\[
		a_z(v)=\begin{cases}z_v,&v\in I,\\0,&v\notin I,\end{cases}
		\]
		and define $\Phi_z:V\to X_{\rm hc}$ by
		\begin{equation}\label{eq:pullback}
			\Phi_z(v)_g=a_z(\sigma(g^{-1})v).
		\end{equation}
		Then:
		\begin{enumerate}[label=\textup{(\roman*)}]
			\item $\Phi_z(v)\in X_{\rm hc}$ for every $v\in V$;
			\item $\Phi_z(\sigma(h)v)=h\Phi_z(v)$ for every $h\in F_r$ and $v\in V$;
			\item $z\mapsto\Phi_z$ is a topological embedding of $[0,1]^I$ into every space $\Map(\rho,F,\delta,\sigma)$, for every continuous pseudometric $\rho$ on $X_{hc}$, finite $F\subseteq F_r$, and $\delta>0$;
			\item under $\rho_{0,\infty}$ this embedding is isometric for the supremum metric on $[0,1]^I$.
		\end{enumerate}
	\end{lemma}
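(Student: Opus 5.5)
The plan is to verify the four items in order, each by a direct computation from \eqref{eq:pullback} and the fact that $\sigma$ is a homomorphism.

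For (i), fix $v$ and write $x=\Phi_z(v)$. Take $g\in F_r$ and $s\in S$. Then
\[
x_gx_{gs}=a_z(\sigma(g^{-1})v)\,a_z(\sigma(s^{-1})\sigma(g^{-1})v).
\]
Put $w=\sigma(g^{-1})v$ and $u=\sigma(s)^{-1}w$, so $w=\sigma(s)u$ and $\{u,w\}$ is an edge of $\cG_\sigma$.
\begin{itemize}
\item If $u=w$, the vertex is looped, so it is not in $I$, and both factors vanish.
\item Otherwise $u$ and $w$ cannot both lie in $I$, so at least one factor is $0$.
\end{itemize}
Either way the product is $0$, hence $x\in X_{\rm hc}$.

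For (ii), use that $\sigma$ is a homomorphism. We have $\Phi_z(\sigma(h)v)_g=a_z(\sigma(g^{-1})\sigma(h)v)=a_z(\sigma(g^{-1}h)v)$. On the other side, $(h\Phi_z(v))_g=\Phi_z(v)_{h^{-1}g}=a_z(\sigma(g^{-1}h)v)$. The two agree.

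For (iii), item (ii) gives $\Phi_z\circ\sigma(s)=s\Phi_z$ exactly. So $\rho_2(\Phi_z\circ\sigma(s),s\Phi_z)=0\le\delta$ for every $s$, and $\Phi_z\in\Map(\rho,F,\delta,\sigma)$ for all $\rho,F,\delta$. Next, $z\mapsto\Phi_z$ is continuous into $X_{\rm hc}^V$ with the product topology, since each coordinate $\Phi_z(v)_g$ is either a single coordinate $z_w$ or the constant $0$. It is injective because $\Phi_z(v)_e=a_z(v)=z_v$ for $v\in I$. A continuous injection from the compact space $[0,1]^I$ into the Hausdorff space $X_{\rm hc}^V$ is an embedding. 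Here $\Map(\cdot)$ carries the subspace topology of $X^V$, which is what the cover $\cU^V$ uses.

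For (iv), compute $\rho_0(\Phi_z(v),\Phi_{z'}(v))=|a_z(v)-a_{z'}(v)|$. This equals $|z_v-z'_v|$ for $v\in I$ and $0$ otherwise. Taking the maximum over $v$ gives $\rho_{0,\infty}(\Phi_z,\Phi_{z'})=\|z-z'\|_\infty$.

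There is no real obstacle. The only care needed is in (i): the looped-vertex case must be handled using the definition of independence, which excludes looped vertices, and the inverse conventions must be tracked correctly.
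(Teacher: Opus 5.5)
Your proposal is correct and follows the paper's proof essentially step by step. Items (i), (ii) and (iv) use the same direct computations from the pullback formula and the homomorphism property, and (iii) uses the same combination of exact equivariance, continuity, and injectivity via $\Phi_z(v)_e=z_v$, with your explicit handling of looped vertices and the compact-to-Hausdorff embedding argument filling in details the paper leaves implicit.
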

	
	\begin{proof}
		For $g\in F_r$ and $s_j\in S$, put $u=\sigma(g^{-1})v$.  Since $\sigma$ is a homomorphism,
		\[
		\Phi_z(v)_g\Phi_z(v)_{gs_j}
		=a_z(u)a_z(\sigma(s_j^{-1})u)=0,
		\]
		because the two arguments are adjacent in $\cG_\sigma$ and the positive support of $a_z$ is contained in $I$.  This proves (i).
		
		For $h,g\in F_r$,
		\[
		\Phi_z(\sigma(h)v)_g
		=a_z(\sigma(g^{-1}h)v)
		=\Phi_z(v)_{h^{-1}g}
		=(h\Phi_z(v))_g,
		\]
		which proves (ii).  Thus $\Phi_z$ is exactly equivariant and belongs to every microstate space.  Continuity is immediate, and injectivity follows from
		\[
		\Phi_z(v)_e=a_z(v)=z_v\qquad(v\in I).
		\]
		Finally,
		\[
		\rho_{0,\infty}(\Phi_z,\Phi_{z'})
		=\max_{v\in V}|a_z(v)-a_{z'}(v)|
		=\max_{v\in I}|z_v-z'_v|.
		\]
	\end{proof}
	
	\begin{proposition}[Topological lower bound]\label{prop:top-lower}
		For every homomorphic sofic approximation $\Sigma$,
		\[
		\mdim_\Sigma(X_{\rm hc})\ge q_\Sigma.
		\]
	\end{proposition}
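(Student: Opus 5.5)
The plan is to bound the sup over covers from below by testing a single cover pulled back from the coordinate at the identity, and then to use Lemma~\ref{lem:exact-cube} to plant an honest cube of dimension $\ind(\cG_{\sigma_n})$ inside every microstate space.

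\emph{Choice of cover.} Take $\cU=\{U_0,U_1\}$ with $U_0=\{x\in X_{\rm hc}:x_e<2/3\}$ and $U_1=\{x:x_e>1/3\}$, a finite open cover of $X_{\rm hc}$. By definition of $\mdim_\Sigma$ it suffices to show, for every finite $F\subseteq F_r$ and every $\delta>0$,
\[
D(\cU,\rho,F,\delta,\sigma_n)\ge \ind(\cG_{\sigma_n})\qquad\text{for all }n .
\]
Dividing by $|V_n|$ and taking $\limsup_n$ then yields at least $q_\Sigma$ after $\inf_F\inf_\delta$, and hence $\mdim_\Sigma(X_{\rm hc})\ge q_\Sigma$.

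\emph{The embedded cube.} Fix $n$ and a maximum independent set $I_n$. By Lemma~\ref{lem:exact-cube}(iii), the map $\Phi:[0,1]^{I_n}\to\Map(\rho,F,\delta,\sigma_n)$, $z\mapsto\Phi_z$, is a continuous embedding into the microstate space.

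\emph{Pulling back covers.} Let $\beta$ be any finite open cover of $\Map(\rho,F,\delta,\sigma_n)$ refining $\cU^{V_n}|_{\Map}$. Then $\Phi^{-1}\beta$ is a finite open cover of $[0,1]^{I_n}$, and $\mathrm{ord}(\Phi^{-1}\beta)\le \mathrm{ord}(\beta)$, since a point of the cube lies in exactly as many members of $\Phi^{-1}\beta$ as its image lies in members of $\beta$.

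We check that $\Phi^{-1}\beta$ refines $\cV^{I_n}$. Each member of $\beta$ lies in some product set $\prod_{v\in V_n}U_{i_v}$. Its preimage under $\Phi$ lies in
\[
\{z:\Phi_z(v)_e\in\text{the interval of }U_{i_v}\ \forall v\}.
\]
Since $\Phi_z(v)_e=z_v$ for $v\in I_n$, this set is contained in $\prod_{v\in I_n}V_{i_v}$ with $V_0=[0,2/3)$, $V_1=(1/3,1]$. Coordinates outside $I_n$ impose only conditions on the value $0$, and these are either vacuous or make the preimage empty; either way the containment holds.

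Lemma~\ref{lem:cube-cover} gives $D(\cV^{I_n})=|I_n|$. Hence $\mathrm{ord}(\beta)\ge|I_n|=\ind(\cG_{\sigma_n})$, and since $\beta$ was arbitrary, the claimed bound on $D(\cU,\rho,F,\delta,\sigma_n)$ follows.

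\emph{Main obstacle.} The only delicate point is making sure the refinement and order monotonicity survive restriction to the subspace image of the cube. This is routine: preimages of open sets under a continuous map are open, preimages preserve refinement, and they do not increase order. The main concern is bookkeeping, namely that $D$ is computed on $\Map$ with the relative topology. No approximation error enters, because the cube microstates are exactly equivariant.
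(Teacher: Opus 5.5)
Your proposal is correct and follows essentially the same route as the paper: the same two-set cover $\cU$ defined by the coordinate $x_e$, the same exactly equivariant cube microstates on a maximum independent set $I_n$, and Lemma~\ref{lem:cube-cover}, which together give $D(\cU,\rho,F,\delta,\sigma_n)\ge \ind(\cG_{\sigma_n})$. Your argument that pulling back an arbitrary refining cover $\beta$ preserves refinement and does not increase order just spells out the monotonicity step. The paper compresses that step into the identity $\Theta^{-1}(\cU^{V_n}|_{\Theta([0,1]^{I_n})})=\cV^{I_n}$, which holds only up to empty members and repetitions; your containment argument handles those cases correctly.
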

	
	\begin{proof}
 Let $\rho$ be a compatible metric on $X$, $F$ a finite subset of $F_r$ and $\delta>0$.
		Let
		\[
		U_0=\{x\in X_{\rm hc}:x_e<2/3\},
		\qquad
		U_1=\{x\in X_{\rm hc}:x_e>1/3\},
		\]
		and $\cU=\{U_0,U_1\}$. 
 For each $n$, take a maximum independent set $I_n$ in $\cG_{\sigma_n}$. 
 By Lemma \ref{lem:exact-cube}, for $z=(z_v)_{v\in {I_n}}\in[0,1]^{I_n}$, we can construst the exact cube microstates $\Phi_z(v)_g=a_z(\sigma(g^{-1})v)$, where $a_z:V_n\to[0,1]$ is
		\[
		a_z(v)=\begin{cases}z_v,&v\in {I_n},\\0,&v\notin {I_n}.\end{cases}
		\]
The map $\Theta:[0,1]^{I_n}\to \Map(\rho,F,\delta,\sigma_n)$, $\Theta(z)=\Phi_z$ is a topological embedding.
 Let
		\[
		\cV=\{[0,2/3),\ (1/3,1]\}
		\]
        Then by the construstion of $\Phi_z$, we have $\Theta^{-1}(\cU^{V_n}|_{\Theta([0,1]^{I_n})})=\cV^{I_{n}}$.
 Hence monotonicity under restriction and Lemma \ref{lem:cube-cover} give
		\begin{align*}
		 	D(\cU,\rho,F,\delta,\sigma_n) &\geq D(\Theta^{-1}(\cU^{V_n}|_{\Map(\rho,F,\delta,\sigma_n)}))\\
 &\geq D(\Theta^{-1}(\cU^{V_n}|_{\Theta([0,1]^{I_n})}))\\
 &=D(\cV^{I_{n}})\geq 
		 |I_n|=\ind(\cG_{\sigma_n})   
		\end{align*}
for  every finite $F$, and every $\delta>0$.  Taking the normalized limsup, then the infima over $F$ and $\delta$, proves the result.
	\end{proof}
	
	\begin{corollary}[Metric lower bound]\label{prop:metric-lower}
		For every homomorphic sofic approximation $\Sigma$,
		\[
		\overline{\mdim}_{\Sigma,\mathrm M}(X_{\rm hc},\rho_0)\ge \underline{\mdim}_{\Sigma,\mathrm M}(X_{\rm hc},\rho_0)\ge q_\Sigma.
		\]
	\end{corollary}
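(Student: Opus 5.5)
The plan is to turn the exact cube microstates of Lemma~\ref{lem:exact-cube} into large $(\rho_{0,\infty},\varepsilon)$-separated subsets of every microstate space. The first inequality in the statement is immediate, since $\liminf\le\limsup$. For the second, fix $\varepsilon\in(0,1/4)$, a finite set $F\subseteq F_r$, and $\delta>0$. For each $n$ take a maximum independent set $I_n$ in $\cG_{\sigma_n}$, as in the proof of Proposition~\ref{prop:top-lower}.

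Let $k=\lfloor 1/(2\varepsilon)\rfloor$ and consider the grid
\[
L=\{0,\ 2\varepsilon,\ 4\varepsilon,\ \dots,\ 2k\varepsilon\}\subseteq[0,1],
\]
which has $k+1\ge 1/(2\varepsilon)$ points, any two of which differ by at least $2\varepsilon>\varepsilon$. For $z\in L^{I_n}$, part (iii) of Lemma~\ref{lem:exact-cube} says that $\Phi_z\in\Map(\rho_0,F,\delta,\sigma_n)$; indeed $\Phi_z$ is exactly equivariant, so every $\rho_2$-defect vanishes. Part (iv) says that $\rho_{0,\infty}(\Phi_z,\Phi_{z'})=\max_{v\in I_n}|z_v-z'_v|\ge 2\varepsilon$ whenever $z\ne z'$. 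Consequently
\[
N_\varepsilon\bigl(\Map(\rho_0,F,\delta,\sigma_n),\rho_{0,\infty}\bigr)\ge (k+1)^{|I_n|}\ge (2\varepsilon)^{-\ind(\cG_{\sigma_n})}.
\]

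Take logarithms, divide by $|V_n|$, and pass to the $\limsup$ in $n$. This gives
\[
h^\varepsilon_{\Sigma,\infty}(\rho_0,F,\delta)\ge q_\Sigma\log\frac1{2\varepsilon}.
\]
The bound does not depend on $F$ or $\delta$, so it survives the infima and yields $h^\varepsilon_{\Sigma,\infty}(\rho_0)\ge q_\Sigma\bigl(|\log\varepsilon|-\log 2\bigr)$. Dividing by $|\log\varepsilon|$ and letting $\varepsilon\downarrow0$ gives $\underline{\mdim}_{\Sigma,\mathrm M}(X_{\rm hc},\rho_0)\ge q_\Sigma$.

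No step here is a real obstacle. The only point that needs care is that $\Phi_z$ lies in the microstate space for all $F$ and $\delta$ at once, which is exactly what exact equivariance, part (ii) of Lemma~\ref{lem:exact-cube}, provides. Alternatively, the bound follows from \eqref{eq:metric-inf-dyn}, \eqref{eq:comparison}, and Proposition~\ref{prop:top-lower}, but only for the infimum over pseudometrics rather than for the specific $\rho_0$. The direct counting argument above is therefore the one to use.
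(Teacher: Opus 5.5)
Your argument is correct, but it follows a different route from the paper. The paper proves the corollary in one line. It chains Proposition~\ref{prop:top-lower} with Li's comparison theorem \cite[Theorem 6.1]{Li13} and the identity \eqref{eq:metric-inf-dyn}, which gives $q_\Sigma\le\mdim_\Sigma(X_{\rm hc})\le\mdim_{\Sigma,\mathrm M}(X_{\rm hc})\le\underline{\mdim}_{\Sigma,\mathrm M}(X_{\rm hc},\rho_0)$.

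You instead count separated sets directly. You use the grid $L^{I_n}$ inside the exact cube together with the isometry in part (iv) of Lemma~\ref{lem:exact-cube}, which the paper states but does not invoke in its own proof. Your route buys three things:
\begin{itemize}
\item it is elementary and self-contained;
\item it avoids Li's nontrivial comparison theorem and the passage from compatible metrics to dynamically generating pseudometrics;
\item it gives the explicit bound $h^\varepsilon_{\Sigma,\infty}(\rho_0)\ge q_\Sigma\log\frac{1}{2\varepsilon}$ for every $\varepsilon<1/4$.
\end{itemize}
The paper's route goes through the topological invariant $\mdim_\Sigma$. In exchange, it yields $\underline{\mdim}_{\Sigma,\mathrm M}(X_{\rm hc},\rho)\ge q_\Sigma$ for every dynamically generating continuous pseudometric $\rho$ at once, not only for $\rho_0$.

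Your closing remark, however, is mistaken. The chain through \eqref{eq:metric-inf-dyn}, \eqref{eq:comparison} and Proposition~\ref{prop:top-lower} does give the bound for $\rho_0$ itself. Since $\rho_0$ is dynamically generating, it is one of the pseudometrics in the infimum in \eqref{eq:metric-inf-dyn}. Hence
\[
q_\Sigma\le\mdim_\Sigma(X_{\rm hc})\le\inf_{\rho}\underline{\mdim}_{\Sigma,\mathrm M}(X_{\rm hc},\rho)\le\underline{\mdim}_{\Sigma,\mathrm M}(X_{\rm hc},\rho_0).
\]
A lower bound on an infimum is a lower bound on each member; only upper bounds fail to transfer. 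This chain is exactly the paper's proof, so it is a valid alternative to your counting argument, not a weaker one.
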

	
	\begin{proof}
	Combining \cite[Theorem 6.1]{Li13} and Proposition \ref{prop:top-lower}, we have
    \[ q_\Sigma \leq \mdim_\Sigma(X_{\rm hc}) \leq \underline{\mdim}_{\Sigma,\mathrm M}(X_{\rm hc},\rho_0) \leq \overline{\mdim}_{\Sigma,\mathrm M}(X_{\rm hc},\rho_0).
    \]
      This completes the proof.
	\end{proof}
	
	\subsection{The metric upper bound}
	
	The key point is that a microstate can have many coordinates above a fixed threshold only where the action graph is almost independent.
	
	\begin{lemma}\label{lem:threshold}
		Let $\sigma:F_r\to\Sym(V)$ be a homomorphism and $F$ a finite subset of $F_r$ and $\delta>0$. Let
		\[
		F_0=\{s_1^{-1},\ldots,s_r^{-1}\},
		\]
		and let $\varphi\in\Map(\rho_0,F_0,\delta,\sigma)$.  Write
		\[
		y_v=\varphi(v)_e
		\qquad(v\in V)
		\]
		and, for $\tau>0$, set
		\[
		A_\tau=\{v\in V:y_v\ge\tau\}.
		\]
		Then one can delete at most $r\delta^2\tau^{-2}|V|$ vertices from $A_\tau$ and obtain an independent set.  In particular,
		\begin{equation}\label{eq:threshold-size}
			|A_\tau|
			\le \ind(\cG_\sigma)+r\delta^2\tau^{-2}|V|.
		\end{equation}
	\end{lemma}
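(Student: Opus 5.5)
We bound the number of "bad" edges inside $A_\tau$ using the microstate condition for the generators $s_j^{-1}$, then delete one endpoint of each bad edge.

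Fix $j$ and apply the defining inequality of $\Map(\rho_0,F_0,\delta,\sigma)$ with $s=s_j^{-1}$:
\[
\frac1{|V|}\sum_{v\in V}\bigl|\varphi(\sigma(s_j^{-1})v)_e-(s_j^{-1}\varphi(v))_e\bigr|^2\le\delta^2 .
\]
Since $(s_j^{-1}x)_e=x_{s_j}$, this reads
\[
\sum_{v}|y_{\sigma(s_j)^{-1}v}-\varphi(v)_{s_j}|^2\le\delta^2|V|.
\]
Call $v$ \emph{$j$-bad} if both $v$ and $u=\sigma(s_j)^{-1}v$ lie in $A_\tau$.

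For such $v$, the point $\varphi(v)\in X_{\rm hc}$ satisfies $\varphi(v)_e\varphi(v)_{s_j}=0$. Since $\varphi(v)_e=y_v\ge\tau>0$, we get $\varphi(v)_{s_j}=0$. Hence $|y_u-\varphi(v)_{s_j}|=y_u\ge\tau$, so each $j$-bad $v$ contributes at least $\tau^2$ to the sum. Therefore the number of $j$-bad vertices is at most $\delta^2\tau^{-2}|V|$.

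Every edge of $\cG_\sigma$ with both endpoints in $A_\tau$ has the form $\{u,\sigma(s_j)u\}$. Putting $v=\sigma(s_j)u$, such an edge corresponds to a $j$-bad vertex $v$. Loops fit the same description: if $\sigma(s_j)v=v\in A_\tau$, then $v$ is $j$-bad, since $u=v$.

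Let $B$ be the set of vertices that are $j$-bad for some $j$. Then $|B|\le r\delta^2\tau^{-2}|V|$. Every edge or loop internal to $A_\tau$ has its endpoint $v$ in $B$, so $A_\tau\setminus B$ contains no internal edge and no looped vertex, i.e., it is independent. This gives
\[
|A_\tau|\le|A_\tau\setminus B|+|B|\le \ind(\cG_\sigma)+r\delta^2\tau^{-2}|V|,
\]
which is \eqref{eq:threshold-size}.

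The only delicate point is the orientation bookkeeping. One must check that using $s_j^{-1}$ in $F_0$ produces the coordinate $s_j$ of $\varphi(v)$, which is the one constrained against $e$ by the hard-core condition. One must also check that each edge is charged to a vertex of $A_\tau$ itself, so that deleting $B\cap A_\tau$ suffices.
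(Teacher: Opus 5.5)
Your proof is correct and follows the paper's argument essentially step for step. You use the microstate condition for $s_j^{-1}$ to show that the set of vertices $v$ with $v,\sigma(s_j^{-1})v\in A_\tau$ has at most $\delta^2\tau^{-2}|V|$ elements, and then remove the union of these $r$ sets to leave an independent set; your explicit charging of each internal edge $\{u,\sigma(s_j)u\}$, and each loop, to the endpoint $v=\sigma(s_j)u\in A_\tau$ simply spells out the paper's ``suitable orientation'' remark.
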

	
	\begin{proof}
		
        For $1\le j\le r$, the microstate condition for $s_j^{-1}$ gives
		\begin{align*}
			\delta^2
			&\geq \rho_{0,2}(\varphi\circ\sigma(s_j^{-1}),s_j^{-1}\varphi)^2\\
			&=\frac1{|V|}\sum_{v\in V}
			\left|y_{\sigma(s_j^{-1})v}-\varphi(v)_{s_j}\right|^2.
		\end{align*}
		If both $v$ and $\sigma(s_j^{-1})v$ lie in $A_\tau$, then $y_v\ge\tau>0$.  Since $\varphi(v)\in X_{\rm hc}$,
		\[
		y_v\varphi(v)_{s_j}=0,
		\]
		so $\varphi(v)_{s_j}=0$.  The corresponding summand is therefore at least $\tau^2$.  Let
 \[ T_{j}=\{v\in V:v,\sigma(s_j^{-1})v\in A_\tau\},\]
 for any $1\leq j \leq r$.   Consequently
		\[
		\left|T_j\right|
    \le \delta^2\tau^{-2}|V|, \text{~for any ~} 1 \leq j \leq r.
		\]
		
Then $A_{\tau} \setminus \bigcup_{j=1}^{r}T_j$
is a indepedent set. Indeed, if an undirected generator edge remained inside $A_{\tau} \setminus \bigcup_{j=1}^{r}T_j$, then for a suitable orientation it would have been deleted above, a contradiction.  Loops are handled in the same way. Hence
        \[|A_{\tau}|-r \delta^2\tau^{-2}|V| \leq \left|A_{\tau} \setminus \bigcup_{j=1}^{r}T_j\right|\leq \ind(\cG_\sigma).\]
 This proves \eqref{eq:threshold-size}.
	\end{proof}
	
	\begin{proposition}[Metric upper bound]\label{prop:metric-upper}
		For every homomorphic sofic approximation $\Sigma$,
		\[
		\underline{\mdim}_{\Sigma,\mathrm M}(X_{\rm hc},\rho_0)\le 
		\overline{\mdim}_{\Sigma,\mathrm M}(X_{\rm hc},\rho_0)\le q_\Sigma.
		\]
	\end{proposition}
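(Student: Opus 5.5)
The first inequality holds trivially, since $\liminf\le\limsup$. The real task is to bound $h^\varepsilon_{\Sigma,\infty}(\rho_0)$ from above by $(q_\Sigma+o(1))|\log\varepsilon|$ as $\varepsilon\downarrow0$. Because $h^\varepsilon_{\Sigma,\infty}(\rho_0)$ is an infimum over $F$ and $\delta$, it is enough to take $F=F_0=\{s_1^{-1},\ldots,s_r^{-1}\}$ and $\delta$ small. We will choose $\tau=\varepsilon/2$ and $\delta=\delta(\varepsilon)$ with $r\delta^2\tau^{-2}\le\eta$, where $\eta>0$ is arbitrary. The point of working with $\rho_{0,\infty}$ is that $\rho_{0,\infty}(\varphi,\psi)=\max_v|\varphi(v)_e-\psi(v)_e|$ depends only on the vector $y^\varphi=(\varphi(v)_e)_{v\in V_n}\in[0,1]^{V_n}$. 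A $(\rho_{0,\infty},\varepsilon)$-separated subset $E\subseteq\Map(\rho_0,F_0,\delta,\sigma_n)$ therefore yields vectors $y^\varphi$ that are pairwise more than $\varepsilon$ apart in sup norm.

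We encode each $\varphi\in E$ by two pieces of data: the set $A=A_\tau(\varphi)=\{v:y_v\ge\tau\}$, and the quantization $\lfloor y_v/\tau\rfloor\in\{1,\ldots,\lceil1/\tau\rceil\}$ for each $v\in A$. Suppose two elements of $E$ have the same code. On $A$ their coordinates then lie in the same interval of length $\tau$, so they differ by less than $\tau$. Off $A$ both coordinates lie in $[0,\tau)$, so they also differ by less than $\tau$. Since $\tau=\varepsilon/2<\varepsilon$, this contradicts separation, so the coding map is injective on $E$. By Lemma~\ref{lem:threshold}, $|A|\le\ind(\cG_{\sigma_n})+\eta|V_n|=:m_n$. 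Counting codes gives
\[
N_\varepsilon\bigl(\Map(\rho_0,F_0,\delta,\sigma_n),\rho_{0,\infty}\bigr)\le \sum_{k\le m_n}\binom{|V_n|}{k}\,(\lceil 2/\varepsilon\rceil)^{k}\le 2^{|V_n|}\,(\lceil 2/\varepsilon\rceil)^{m_n}.
\]
The main subtlety is here: the number of possible threshold sets must be bounded by $2^{|V_n|}$, which is independent of $\varepsilon$. We should not try to bound it by something depending on $\eta$ or $\tau$ in a way that grows with $|\log\varepsilon|$. The crude bound $2^{|V_n|}$ is acceptable because it contributes only $\log 2$ to the normalized entropy, and $\log2$ is killed after dividing by $|\log\varepsilon|$.

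Taking $\frac1{|V_n|}\log$ and $\limsup_n$ gives
\[
h^\varepsilon_{\Sigma,\infty}(\rho_0)\le h^\varepsilon_{\Sigma,\infty}(\rho_0,F_0,\delta)\le \log 2+(q_\Sigma+\eta)\log\lceil 2/\varepsilon\rceil.
\]
We divide by $|\log\varepsilon|$ and let $\varepsilon\downarrow0$. Since $\log\lceil2/\varepsilon\rceil/|\log\varepsilon|\to1$, this yields $\overline{\mdim}_{\Sigma,\mathrm M}(X_{\rm hc},\rho_0)\le q_\Sigma+\eta$. As $\eta>0$ was arbitrary, the claim follows; for each fixed $\eta$, the choice $\delta=\tau\sqrt{\eta/r}$ depends on $\varepsilon$, which is allowed because the infimum over $\delta$ is taken inside, for each $\varepsilon$.
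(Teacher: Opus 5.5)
Your proposal is correct and follows essentially the same route as the paper. Both arguments encode each separated microstate by its threshold set $A_\tau$ together with quantized values on that set, use Lemma~\ref{lem:threshold} to bound $|A_\tau|$ by $\ind(\cG_{\sigma_n})+r\delta^2\tau^{-2}|V_n|$, bound the number of possible threshold sets crudely by $2^{|V_n|}$, and let the resulting $\log 2$ term vanish after dividing by $|\log\varepsilon|$. The differences are cosmetic: you take $\tau=\varepsilon/2$ where the paper takes $\varepsilon/4$, and you fix $\eta$ with $\delta=\tau\sqrt{\eta/r}$ and then send $\eta\to0$, where the paper takes the infimum over $\delta$ directly for each fixed $\varepsilon$.
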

	
	\begin{proof}
     Let $F_0=\{s_1^{-1},\ldots,s_r^{-1}\}$  and $\delta>0$.
		Fix $0<\varepsilon<1/2$ and put $\tau=\varepsilon/4$.  For any $n\in\mathbb{N}$, let $E_n$ be an $(\rho_{0,\infty},\varepsilon)$-separated subset of
		\[
		\Map(\rho_0,F_0,\delta,\sigma_n).
		\]
		 For $\varphi\in E_n$, record the following data:
		\begin{enumerate}[label=\textup{(\roman*)}]
			\item the set $A_\tau(\varphi)=\{v:\varphi(v)_e\ge\tau\}$;
			\item for every $v\in A_\tau(\varphi)$, the interval of the partition of $[0,1]$ into intervals of length at most $\varepsilon/2$ which contains $\varphi(v)_e$.
		\end{enumerate}
		If two microstates have the same record, then their identity coordinates differ by less than $\varepsilon/2$ on the recorded set.  Outside it, both coordinates are less than $\tau$, so their difference is less than $\tau$.  Hence their $\rho_{0,\infty}$-distance is less than $\varepsilon$, contradicting separation.  Therefore the record map is injective on $E_n$.
		
		There are at most $2^{|V_n|}$ choices for $\bigcup_{\varphi \in E_n}A_{\tau}(\varphi)$ and at most $1+2/\varepsilon$ interval labels per active coordinate.  By Lemma \ref{lem:threshold},
		\[
		|E_n| \le 2^{|V_n|}
		\left(1+\frac2\varepsilon\right)^{
			\max_{\varphi \in E_n}|A_{\tau}(\varphi)|}
		\le 2^{|V_n|}
		\left(1+\frac2\varepsilon\right)^{
			\ind(\cG_{\sigma_n})+16r\delta^2\varepsilon^{-2}|V_n|}.
		\]
		It follows that
		\begin{align*}
			h^\varepsilon_{\Sigma,\infty}(\rho_0,F_0,\delta)
			&\le \log2+\left(q_\Sigma+16r\delta^2\varepsilon^{-2}\right)
			\log\left(1+\frac2\varepsilon\right).
		\end{align*}
		For fixed $\varepsilon$, take the infimum over $\delta>0$.  Since the definition also takes an infimum over finite $F$, this specific $F_0$ is naturally included.  Thus
		\[
		h^\varepsilon_{\Sigma,\infty}(\rho_0)
		\le \log2+q_\Sigma\log\left(1+\frac2\varepsilon\right).
		\]
		After division by $|\log\varepsilon|$, the constant $\log2$ disappears and the logarithmic ratio tends to $1$ as $\varepsilon\downarrow0$.  The same estimate therefore controls both the lower and upper limits, proving the proposition.
	\end{proof}
	
	\begin{proof}[Proof of Theorem \ref{thm:model-formula}]
		Propositions \ref{prop:metric-lower} and \ref{prop:metric-upper} give
		\[
		\underline{\mdim}_{\Sigma,\mathrm M}(X_{\rm hc},\rho_0)
		=\overline{\mdim}_{\Sigma,\mathrm M}(X_{\rm hc},\rho_0)
		=q_\Sigma.
		\]
		Since $\rho_0$ is dynamically generating, \eqref{eq:metric-inf-dyn} and the comparison inequality \eqref{eq:comparison} give
		\[
		\mdim_\Sigma(X_{\rm hc})
		\le \mdim_{\Sigma,\mathrm M}(X_{\rm hc})
		\le \underline{\mdim}_{\Sigma,\mathrm M}(X_{\rm hc},\rho_0)
		=q_\Sigma.
		\]
		The reverse inequality is Proposition \ref{prop:top-lower}.
	\end{proof}
	
	\section{A bipartite sofic approximation}\label{sec4}
	
	We specialize to
	\[
	F_2=\langle a,b\rangle.
	\]
	Let
	\[
	\chi:F_2\to\Z/2\Z,
	\qquad \chi(a)=\chi(b)=1,
	\]
	be the parity homomorphism.
	
	\begin{lemma}\label{lem:residual-chain}
		There is a sequence of finite-index normal subgroups $N_n\triangleleft F_2$ such that
		\[
		N_n\subseteq\ker\chi,
		\qquad
		\bigcap_{n\ge1}N_n=\{e\}.
		\]
		The left translation actions on $F_2/N_n$ form a homomorphic sofic approximation.
	\end{lemma}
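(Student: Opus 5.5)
The plan is to use residual finiteness of $F_2$ and intersect with $\ker\chi$. Since $F_2$ is residually finite, enumerate the nontrivial elements $g_1,g_2,\ldots$ of $F_2$. For each $k$, choose a finite-index normal subgroup $M_k\triangleleft F_2$ with $g_k\notin M_k$. One way to get $M_k$ is to take a finite quotient in which $g_k$ survives (for instance via a representation into $\mathrm{SL}_2(\Z)$ reduced mod a large prime, or any standard residual-finiteness argument) and then pass to the normal core of a finite-index subgroup. Then set
\[
N_n=\ker\chi\cap\bigcap_{k\le n}M_k .
\]
This is a finite intersection of finite-index normal subgroups, so it is again finite-index and normal. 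It is contained in $\ker\chi$, the sequence is decreasing, and $g_k\notin N_n$ for $n\ge k$. Hence $\bigcap_n N_n=\{e\}$.

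Next define $V_n=F_2/N_n$ and $\sigma_n(g)(hN_n)=ghN_n$. Each $\sigma_n$ is a homomorphism, so only \eqref{eq:sofic-free} has to be checked. Because $N_n$ is normal, $\sigma_n(g)$ fixes $hN_n$ exactly when $h^{-1}gh\in N_n$, that is, when $g\in N_n$. So for $g\neq e$ the set $\Fix(\sigma_n(g))$ is empty once $n$ is large enough that $g\notin N_n$, and the normalized fixed-point count tends to $0$.

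It also has to be shown that $|V_n|\to\infty$. This follows because $F_2$ is infinite and $\bigcap N_n=\{e\}$. If the indices stayed bounded, then the decreasing chain $N_n$ would stabilize at some finite-index subgroup, and that subgroup would have to equal $\{e\}$, which is impossible since $F_2$ is infinite.

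The only substantive input is residual finiteness of free groups (a classical theorem I would cite, e.g.\ Malcev or via $\mathrm{SL}_2(\Z)$). I expect no real obstacle. The one point needing care is that normality of $N_n$ is what turns "$g\notin N_n$" into "$\sigma_n(g)$ has no fixed points at all", and this is why I take normal cores rather than arbitrary finite-index subgroups.
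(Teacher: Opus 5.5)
Your proposal is correct and follows essentially the same route as the paper: you intersect finitely many finite-index normal subgroups separating $g_1,\dots,g_n$ with $\ker\chi$, use normality to show that $\sigma_n(g)$ is fixed-point-free once $g\notin N_n$, and deduce $[F_2:N_n]\to\infty$ from the trivial intersection. Your stabilization argument for the indices (monotone bounded indices force the decreasing chain to stabilize at a finite-index subgroup, which would then have to be $\{e\}$) is slightly more explicit than the paper's one-line justification.
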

	
	\begin{proof}
		Enumerate $F_2\setminus\{e\}$ as $g_1,g_2,\ldots$.  By residual finiteness, for every $m$ there is a finite-index normal subgroup $H_m\triangleleft F_2$ such that $g_m\notin H_m$.  Set
		\[
		K_n=\bigcap_{m=1}^n H_m,
		\qquad
		N_n=K_n\cap\ker\chi.
		\]
		Then $(N_n)$ is a decreasing sequence of finite-index normal subgroups, every $N_n$ is contained in $\ker\chi$, and
		\[
		\bigcap_{n\ge1}N_n=\{e\}.
		\]
		Moreover, if $g=g_m\ne e$, then $g\notin N_n$ for every $n\ge m$.  Hence left translation by $g$ on $F_2/N_n$ has no fixed point for all $n\ge m$: indeed, a fixed coset would imply $x^{-1}gx\in N_n$, and normality would then give $g\in N_n$.  Thus the quotient actions are asymptotically free.  Since the chain is decreasing with trivial intersection and $F_2$ is infinite, the indices $[F_2:N_n]$ tend to infinity.  Therefore the quotient actions form a homomorphic sofic approximation.
	\end{proof}
	
	Let $V_n=F_2/N_n$ and let $\sigma_n^{\rm bip}: F_2 \rightarrow \Sym{(V_n)}$ 
 be the quotient action.  
	Let
	\[
	\widetilde\chi:F_2/N_{n}\to\Z/2\Z,
	\qquad \widetilde\chi(gN_{n})=\chi(g).
	\]
Let
$L_n=\widetilde\chi^{-1}(0)$, and $R_n=\widetilde\chi^{-1}(1)$.
Since $N_n\subseteq\ker\chi$, parity descends to $V_n$ and gives a decomposition
	\[
	V_n=L_n\sqcup R_n,
	\qquad |L_n|=|R_n|=|V_n|/2.
	\]
	Both $a$ and $b$ exchange the two classes.
	
	\begin{proposition}\label{prop:bip-alpha}
		For every $n$,
		\[
		\ind(\cG_{\sigma_n^{\rm bip}})=\frac{|V_n|}{2}.
		\]
		Consequently
		\[
		\mdim_{\Sigma_{\rm bip}}(X_{\rm hc})=\frac12.
		\]
	\end{proposition}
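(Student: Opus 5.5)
The plan has two parts: show $\ind(\cG_{\sigma_n^{\rm bip}})\ge |V_n|/2$ with an explicit independent set, then show $\ind(\cG_{\sigma_n^{\rm bip}})\le |V_n|/2$ with a perfect matching. The mean dimension value will then follow from Theorem~\ref{thm:model-formula}.

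\emph{Lower bound.} I will show that $L_n$ is independent. Every edge of $\cG_{\sigma_n^{\rm bip}}$ has the form $\{v,\sigma_n^{\rm bip}(s)v\}$ with $s\in\{a,b\}$. Since $\chi(s)=1$, we have $\widetilde\chi(sgN_n)=\widetilde\chi(gN_n)+1$, so each edge joins $L_n$ to $R_n$. In particular there are no loops: a loop would give $\widetilde\chi(v)=\widetilde\chi(v)+1$ in $\Z/2\Z$. Hence $L_n$ is independent.

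I also need $|L_n|=|V_n|/2$, which the text asserts. To justify it, note that $\chi$ is surjective and $N_n\subseteq\ker\chi$. Therefore $\widetilde\chi$ is a surjective homomorphism from the group $F_2/N_n$ onto $\Z/2\Z$, and its two fibres are cosets of an index-two subgroup, so they have equal size.

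\emph{Upper bound.} The edges $\{v,\sigma_n^{\rm bip}(a)v\}$ form the orbits of the permutation $\sigma_n^{\rm bip}(a)$. Since $a$ flips parity, each cycle of $\sigma_n^{\rm bip}(a)$ alternates between $L_n$ and $R_n$, and so has even length. Pairing consecutive vertices along each cycle gives a perfect matching of $V_n$ consisting of $|V_n|/2$ edges of $\cG_{\sigma_n^{\rm bip}}$. If a cycle has length $2$, the matching edge is a double edge, which is harmless. An independent set contains at most one endpoint of each matching edge, so $\ind(\cG_{\sigma_n^{\rm bip}})\le |V_n|/2$.

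\emph{Conclusion.} Combining the two bounds gives $\ind(\cG_{\sigma_n^{\rm bip}})/|V_n|=1/2$ for every $n$, so $q_{\Sigma_{\rm bip}}=1/2$. Lemma~\ref{lem:residual-chain} shows that $\Sigma_{\rm bip}$ is a homomorphic sofic approximation. Theorem~\ref{thm:model-formula} therefore gives $\mdim_{\Sigma_{\rm bip}}(X_{\rm hc})=1/2$.

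No step here is a real obstacle. The only point needing care is the equal size of the two parity classes, and the surjectivity argument above handles it.
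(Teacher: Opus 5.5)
Your proof is correct and follows the paper's argument. Parity makes $L_n$ independent, a perfect matching by $a$-edges gives the upper bound, and Theorem~\ref{thm:model-formula} finishes. The only difference is cosmetic: the paper takes the matching $\{v,\sigma_n^{\rm bip}(a)v\}$, $v\in L_n$, directly from the bijection $L_n\to R_n$ induced by $a$. That is exactly your consecutive pairing along the even cycles, and it also gives $|L_n|=|R_n|$ without the surjective-homomorphism argument.
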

	
	\begin{proof}
		Every generator edge crosses from $L_n$ to $R_n$, so $L_n$ is independent and
		\[
		\ind(\cG_{\sigma_n^{\rm bip}})\ge |V_n|/2.
		\]
		The map $v\mapsto av$ is a bijection from $L_n$ onto $R_n$.  Hence the $a$-edges indexed by $L_n$ form a perfect matching.  An independent set contains at most one endpoint from each matching edge, so its size is at most $|V_n|/2$.  The formula for mean dimension follows from Theorem \ref{thm:model-formula}.
	\end{proof}
	
	\section{A uniform random sofic approximation}\label{sec5}
	
	Let $d\in\N$ and choose $\pi_a,\pi_b\in\Sym(d)$ independently and uniformly.  Since $F_2$ is free, there is a unique homomorphism
	\[
	\sigma:F_2\to\Sym(d),
	\qquad \sigma(a)=\pi_a,
	\quad \sigma(b)=\pi_b.
	\]
    Let
\[
 H(t)=-t\log t-(1-t)\log(1-t),\qquad 0<t<1,
\]
be the binary entropy function.
	We first estimate independent sets in $\cG_\sigma$.
	
	\begin{lemma}\label{lem:first-moment}
		Fix $0<q<1/2$, and let $k_d=\lfloor qd\rfloor$.  Let $Z_{d,q}$ be the number of independent subsets of $\cG_\sigma$ of cardinality $k_d$.  Then
		\[
		\lim_{d\to\infty}\frac1d\log\mathbb E Z_{d,q}
		=\Phi_2(q),
		\]
		where
		\[
		\Phi_2(q)=H(q)+2\bigl(2(1-q)\log(1-q)
		-(1-2q)\log(1-2q)\bigr).
		\]
		In particular, if $\Phi_2(q)<0$, when $d \longrightarrow \infty$, 
		\[
		\mathbb P\bigl(\ind(\cG_\sigma)\ge \lfloor qd\rfloor\bigr)\longrightarrow0
		\]
		exponentially fast.  Consequently, when $d \longrightarrow \infty$, 
		\[
		\mathbb P\bigl(\ind(\cG_\sigma)\ge qd\bigr)\longrightarrow0.
		\]
	\end{lemma}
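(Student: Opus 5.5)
We compute $\mathbb E Z_{d,q}$ exactly by linearity of expectation and then apply Stirling's formula. Write $k=k_d$ and fix a set $I\subseteq\{1,\dots,d\}$ with $|I|=k$. The set $I$ is independent in $\cG_\sigma$ exactly when $\pi_a(I)\cap I=\varnothing$ and $\pi_b(I)\cap I=\varnothing$; this single condition also excludes loops, since a looped vertex in $I$ would give $\pi_s(v)=v\in I$. For one uniform permutation $\pi$, the image $\pi(I)$ is a uniformly distributed $k$-subset of $\{1,\dots,d\}$. Hence
\[
\mathbb P\bigl(\pi(I)\cap I=\varnothing\bigr)=\binom{d-k}{k}\Big/\binom dk=\frac{(d-k)!^2}{(d-2k)!\,d!}.
\]
This agrees with the formula quoted in the introduction. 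Because $\pi_a$ and $\pi_b$ are independent, we obtain
\[
\mathbb E Z_{d,q}=\binom dk\left(\frac{(d-k)!^2}{(d-2k)!\,d!}\right)^2 .
\]

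Next we take $\frac1d\log$ and use $\log m!=m\log m-m+O(\log m)$ uniformly in $m\le d$. We have $k/d\to q$ with $|k-qd|\le1$. The condition $q<1/2$ guarantees $d-2k\to\infty$ linearly, so every factorial has argument of order $d$ and the $O(\log d)$ errors vanish after division by $d$. The binomial coefficient contributes $H(q)$. In the fraction the linear terms $-m$ cancel, since $2(d-k)-(d-2k)-d=0$. The $m\log m$ terms give
\[
2(1-q)\log(1-q)-(1-2q)\log(1-2q),
\]
because the $\log d$ parts cancel for the same reason. Squaring the fraction doubles this contribution, which yields $\Phi_2(q)$. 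Replacing $qd$ by $\lfloor qd\rfloor$ is harmless because the limit function is continuous.

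For the probabilistic consequences, any independent set of size at least $k$ contains an independent subset of size exactly $k$. So if $\ind(\cG_\sigma)\ge k$ then $Z_{d,q}\ge1$, and Markov's inequality gives
\[
\mathbb P(\ind(\cG_\sigma)\ge k)\le \mathbb E Z_{d,q}=e^{d(\Phi_2(q)+o(1))}.
\]
When $\Phi_2(q)<0$ this decays exponentially. The last assertion follows because $\ind(\cG_\sigma)\ge qd$ implies $\ind(\cG_\sigma)\ge\lfloor qd\rfloor$.

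The only delicate step is bookkeeping in the Stirling asymptotics. One must check that the cancellation of the linear and $\log d$ terms is exact. One must also check that the floor perturbation and the polynomial prefactors are absorbed into the $o(1)$. Both hold because $q$ is fixed strictly between $0$ and $1/2$.
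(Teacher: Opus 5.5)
Your proposal is correct and follows the paper's proof essentially verbatim: the exact first-moment formula $\mathbb E Z_{d,q}=\binom dk\bigl(\binom{d-k}{k}/\binom dk\bigr)^2$ (from uniformity of $\pi(I)$ and independence of $\pi_a,\pi_b$), then Stirling asymptotics, then Markov's inequality. Your version is slightly cleaner in two respects: you work directly with $k=\lfloor qd\rfloor$ instead of first assuming $qd$ is integral, and you write the inclusion $\{\ind(\cG_\sigma)\ge k_d\}\subseteq\{Z_{d,q}\ge 1\}$ correctly, where the paper has a typo ($\le 1$).
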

	
	\begin{proof}
		Firstly, suppose $qd$ is integral and fix $I\subseteq[d]$ with $|I|=qd$.  If $I$ is an independent subsets of $\cG_\sigma$, then the condition that $I$ contain no edge arising from a permutation $\pi$ is
		\[
		\pi(I)\cap I=\varnothing.
		\]
		The image $\pi(I)$ is uniformly distributed among the $qd$-subsets of $[d]$, so
		\begin{align*}
			\mathbb P(\pi(I)\cap I=\varnothing)
			&=\frac{\binom{(1-q)d}{qd}}{\binom d{qd}}\\
			&=\frac{((1-q)d)!^2}{((1-2q)d)!d!}.
		\end{align*}
		The two generator permutations are independent.  Therefore
		\[
		\mathbb E Z_{d,q}
		=\binom d{qd}
		\left(\frac{((1-q)d)!^2}{((1-2q)d)!d!}\right)^2.
		\]
	By	Stirling's formula, we have
  \[\mathbb E Z_{d,q}
		=\exp(d(\Phi_2(q)+o(1))).
		\]
  Replacing $qd$ by $\lfloor qd\rfloor$ changes the normalized logarithm by $o(1)$.
		
		If an independent set has size at least $k_d$, it contains an independent subset of size exactly $k_d$. Note that $\{\ind(\cG_\sigma)\ge k_d\}\subseteq\{Z_{d,q}\leq 1\}$.
        Hence by Markov's inequality, we have
		\[
		\mathbb P(\ind(\cG_\sigma)\ge k_d)
		\le \mathbb E Z_{d,q}.
		\]
		When $\Phi_2(q)<0$, the right side decays exponentially.  Finally,
		\[
		\{\ind(\cG_\sigma)\ge qd\}
		\subseteq
		\{\ind(\cG_\sigma)\ge \lfloor qd\rfloor\},
		\]
		which proves the last assertion.
	\end{proof}
	
	\begin{lemma}\label{lem:phi-negative}
		One has
		\[
		\Phi_2(9/20)<-0.16.
		\]
		In particular, the conclusion of Lemma \ref{lem:first-moment} applies with $q=9/20$.
	\end{lemma}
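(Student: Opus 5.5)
The plan is to evaluate $\Phi_2(9/20)$ in closed form as a rational combination of logarithms of small primes, and then bound it by rigorous rational enclosures of those logarithms. The second assertion follows at once: since $0<9/20<1/2$ and $\Phi_2(9/20)<0$, Lemma~\ref{lem:first-moment} applies with $q=9/20$.

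\emph{Step 1: exact form.} At $q=9/20$ we have $1-q=11/20$ and $1-2q=1/10$. Hence
\[
\Phi_2(9/20)=-\tfrac{9}{20}\log\tfrac{9}{20}-\tfrac{11}{20}\log\tfrac{11}{20}+\tfrac{22}{10}\log\tfrac{11}{20}+\tfrac{2}{10}\log 10 .
\]
Write $\log\tfrac{9}{20}=2\log3-2\log2-\log5$, $\log\tfrac{11}{20}=\log11-2\log2-\log5$ and $\log10=\log2+\log5$, then collect coefficients. The coefficient of $\log 11$ is $-\tfrac{11}{20}+\tfrac{44}{20}=\tfrac{33}{20}$, that of $\log3$ is $-\tfrac{18}{20}$, that of $\log2$ is $\tfrac{18+22-88+4}{20}=-\tfrac{44}{20}$, and that of $\log5$ is $\tfrac{9+11-44+4}{20}=-1$. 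This gives
\[
20\,\Phi_2(9/20)=33\log11-18\log3-44\log2-20\log5
=\log\frac{11^{33}}{3^{18}\,2^{44}\,5^{20}}.
\]
The claim is therefore equivalent to $33\log11-18\log3-44\log2-20\log5<-3.2$.

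\emph{Step 2: numerical certification.} Numerically the left side is about $-3.3317$, so $\Phi_2(9/20)\approx-0.16659$. The margin is about $0.13$ after multiplying by $20$, i.e.\ about $0.0066$ for $\Phi_2$ itself. It therefore suffices to use enclosures of each logarithm accurate to about $10^{-3}$. I would use the standard bounds
\[
0.69314<\log2<0.69315,\qquad 1.09861<\log3<1.09862,
\]
\[
1.60943<\log5<1.60944,\qquad 2.39789<\log11<2.39790.
\]
These can be justified by the rapidly convergent series $\log\frac{1+x}{1-x}=2\sum_{k\ge0}x^{2k+1}/(2k+1)$ with small $x$. For example, $x=1/3$ gives $\log2$ and $x=1/5$ gives $\log(3/2)$; then $\log5$ follows from $\log(5/4)$ with $x=1/9$, and $\log11$ from $\log(11/10)$ with $x=1/21$. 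Each series has an explicit geometric tail bound.

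Using the upper bound for $\log11$ and the lower bounds for the others, the expression is at most
\[
33(2.39790)-18(1.09861)-44(0.69314)-20(1.60943)\approx-3.3314<-3.2,
\]
which gives the claim.

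\emph{Main obstacle.} The only delicate point is rigor in the numerics. I would present the tail-bounded series computation, or equivalently check the integer inequality $11^{33}e^{3.2}<3^{18}2^{44}5^{20}$, rather than quoting decimal values. The generous margin of about $0.13$ in the scaled quantity means that crude interval arithmetic suffices.
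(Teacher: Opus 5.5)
Your proposal is correct and follows the same route as the paper, namely direct substitution into $\Phi_2$ and numerical evaluation (the paper simply states $\Phi_2(9/20)=-0.166585\ldots$). Your version is more rigorous, using the exact identity $20\,\Phi_2(9/20)=33\log 11-18\log 3-44\log 2-20\log 5$ together with certified enclosures of the logarithms; one small slip is that your stated enclosures give about $-3.3310$ rather than $-3.3314$, which does not matter given the margin.
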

	
	\begin{proof}
		Substitution gives
		\[
		\Phi_2(9/20)
		=H(9/20)+2\left(\frac{11}{10}\log\frac{11}{20}
		-\frac1{10}\log\frac1{10}\right)
		=-0.166585\ldots<-0.16.
		\]
	\end{proof}
	
	We also need the standard asymptotic freeness of the random permutation model.
	
	\begin{lemma}[Random permutations are asymptotically free]\label{lem:random-free}
		For every nontrivial reduced word $w\in F_2$ there is a constant $C_w<\infty$ such that
		\[
		\mathbb E|\Fix(\sigma(w))|\le C_w
		\]
		for all sufficiently large $d$.  Consequently, when $d\longrightarrow\infty$,
		\[
		\frac{|\Fix(\sigma(w))|}{d}\longrightarrow0
		\]
		in probability.
	\end{lemma}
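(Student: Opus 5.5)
The plan is to bound $\mathbb E|\Fix(\sigma(w))|$ by exploring the trajectory of a fixed starting vertex along the word, then conclude with Markov's inequality. By linearity of expectation and symmetry of the uniform model,
\[
\mathbb E|\Fix(\sigma(w))|=d\cdot\mathbb P\bigl(\sigma(w)1=1\bigr).
\]
It therefore suffices to show $\mathbb P(\sigma(w)1=1)\le C_w/d$. First reduce to $w$ cyclically reduced. If $w=uw'u^{-1}$, then $\sigma(w)$ is conjugate to $\sigma(w')$, so the two permutations have the same number of fixed points. Now write $w=x_\ell\cdots x_1$ with letters $x_i\in\{a^{\pm1},b^{\pm1}\}$, where $\ell\ge1$.

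Next, expose the walk $v_0=1$, $v_i=\sigma(x_i)v_{i-1}$ step by step, revealing one value of $\pi_a^{\pm1}$ or $\pi_b^{\pm1}$ at each step. Conditional on the previously revealed values, a step is either \emph{forced} or \emph{free}.
\begin{itemize}[label=$\circ$]
\item A step is forced if the needed value was already revealed.
\item A step is free otherwise. Then $v_i$ is uniform among at least $d-\ell$ unrevealed targets, so it lands on any prescribed vertex, in particular on a previously visited one, with probability at most $\ell/(d-\ell)$.
\end{itemize}

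The combinatorial core is the following claim: if no free step ever hits an already visited vertex (no ``coincidence''), then no step is forced and the walk consists of $\ell$ fresh vertices. This holds because $w$ is reduced, so a forced step could only occur by backtracking along the edge just traversed, which is excluded. In that case the final step is free, and it must land exactly on $v_0$ for $w$ to fix $1$. That event has probability at most $1/(d-\ell)$.

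It remains to handle walks with at least one coincidence. Such a walk forms a cycle, so to return to $v_0$ with $w$ cyclically reduced it needs at least one further coincidence, or a last free step hitting $v_0$. A union bound over the positions of the first two coincidences then gives
\[
\mathbb P(\sigma(w)1=1)\le \frac{1}{d-\ell}+\binom{\ell}{2}\Bigl(\frac{\ell}{d-\ell}\Bigr)^2=O_w(1/d),
\]
as long as one of the two coincidence-type events always occurs.

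This step is the main obstacle: showing that a single coincidence cannot produce a fixed point without a second free hit. The argument is that after the first coincidence, the remainder of the walk is determined along already revealed edges only if the word retraces the loop. Reaching $v_0$ exactly through forced steps would then force $w$ to be conjugate to a power structure that backtracks, contradicting cyclic reducedness. Should this case analysis prove delicate, I would fall back on the known Nica/Broder--Shamir estimate that $\mathbb E|\Fix(\sigma(w))|$ converges to the number of divisors of the primitive root exponent of $w$, which is a finite constant.

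Finally, $\mathbb E|\Fix(\sigma(w))|\le C_w$ for large $d$, and Markov's inequality gives
\[
\mathbb P\bigl(|\Fix(\sigma(w))|\ge\eta d\bigr)\le \frac{C_w}{\eta d}\to0
\]
for every $\eta>0$. This is the asserted convergence in probability.
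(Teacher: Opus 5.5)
\textbf{The key step is false.} You claim that a single coincidence cannot produce a fixed point without a second free hit, and hence that $\mathbb P(\sigma(w)1=1)\le \frac{1}{d-\ell}+\binom{\ell}{2}\bigl(\frac{\ell}{d-\ell}\bigr)^2$. Both fail for proper powers, and cyclic reducedness does not rule those out.

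Take $w=a^2$. With probability $1/d$ the first, free step gives $\pi_a(1)=1$. That is one coincidence. The second step then needs $\pi_a(1)$, which is already revealed, so it is forced and returns to $1$ with no second free hit. In fact $\mathbb P(\pi_a^2(1)=1)=2/d$ exactly, so $\mathbb E|\Fix(\sigma(a^2))|=2$. Your displayed bound, multiplied by $d$, tends to $1$. More generally, if $w=u^k$ with $k\ge 2$, one coincidence closing the walk along $u$ at $v_0$ makes every later step forced and yields a fixed point. This is exactly why the Nica limit is the divisor count of $k$ rather than $1$. So your case analysis could at best work for non-powers, and the sentence about ``a power structure that backtracks'' does not hold.

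\textbf{The second coincidence is never needed.} You only want $O_w(1/d)$, and one coincidence already costs that much. As long as no step has landed on an already visited vertex, the visited vertices are distinct. Because $w$ is reduced, the next step is then free, so it lands on a visited vertex with probability at most $\ell/(d-\ell)$. A walk in which no step ever lands on a visited vertex has $\ell+1$ distinct vertices and cannot close up.

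A union bound over the position of the first coincidence then gives $\mathbb P(\sigma(w)1=1)\le \ell^2/(d-\ell)$. Hence $\mathbb E|\Fix(\sigma(w))|\le 2\ell^2$ for $d\ge 2\ell$, and no reduction to cyclically reduced words is needed. This is precisely the paper's argument; it obtains $C_w=\ell(\ell+1)$ by counting at most $i$ visited vertices at step $i$.

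Your fallback to the Nica/Broder--Shamir asymptotics would also prove the lemma. However, it imports a much deeper exact-limit theorem to obtain a crude boundedness statement that the elementary exposure argument already gives.
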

	
	\begin{proof}
Write the reduced word as
\[
w=t_\ell\cdots t_1,
\qquad
 t_i\in\{a,a^{-1},b,b^{-1}\}.
\]
Fix a vertex $v\in[d]$ and follow the path described by the word $w$:
\[
x_0=v,
\qquad
x_i=\sigma(t_i)x_{i-1}
\quad(1\leq i\leq\ell).
\]
Then
\[
x_\ell=\sigma(w)v.
\]
Thus $v$ is fixed by $\sigma(w)$ exactly when $x_\ell=x_0$.

We reveal the values of $\pi_a$ and $\pi_b$ only when they are needed along
this path. Suppose that, before step $i$, the vertices
\[
x_0,x_1,\ldots,x_{i-1}
\]
are all distinct. Since $w$ is reduced, step $i$ cannot simply reverse the
edge used at step $i-1$. Hence the required value of $\pi_a$, $\pi_a^{-1}$,
$\pi_b$, or $\pi_b^{-1}$ has not yet been determined.

At most $i-1$ values of the relevant permutation have already been exposed.
Therefore the next vertex $x_i$ is uniformly distributed among at least
\[
d-(i-1)\geq d-\ell
\]
possible vertices. Among these possibilities, at most the $i$ previously
visited vertices
\[
x_0,\ldots,x_{i-1}
\]
produce a collision. Hence
\[
\mathbb P\bigl(\text{the first collision occurs at step }i\bigr)
\leq \frac{i}{d-\ell}.
\]

If $x_\ell=x_0$, then a collision must occur at some step
$1\leq i\leq\ell$. By the union bound,
\[
\begin{aligned}
\mathbb P\bigl(\sigma(w)v=v\bigr)
&\leq
\sum_{i=1}^{\ell}\frac{i}{d-\ell}\\
&=
\frac{\ell(\ell+1)}{2(d-\ell)}.
\end{aligned}
\]
For $d>2\ell$, this gives
\[
\mathbb P\bigl(\sigma(w)v=v\bigr)
\leq \frac{\ell(\ell+1)}{d}.
\]

Now sum over all $v\in[d]$. Since
\[
|\Fix(\sigma(w))|
=
\sum_{v=1}^{d}\mathbf 1_{\{\sigma(w)v=v\}},
\]
linearity of expectation gives
\[
\begin{aligned}
\mathbb E|\Fix(\sigma(w))|
&=
\sum_{v=1}^{d}\mathbb P\bigl(\sigma(w)v=v\bigr)\\
&\leq
 d\cdot\frac{\ell(\ell+1)}{d}\\
&=
\ell(\ell+1).
\end{aligned}
\]
Thus we may take $C_w=\ell(\ell+1)$.

Finally, for every $\eta>0$, by Markov's inequality, when $d \longrightarrow \infty$, 
\[
\mathbb P\left(
\frac{|\Fix(\sigma(w))|}{d}>\eta
\right)
\leq
\frac{\mathbb E|\Fix(\sigma(w))|}{\eta d}
\leq
\frac{\ell(\ell+1)}{\eta d}
\longrightarrow0.
\]
This proves the claimed convergence in probability.
\end{proof}
	
	\begin{proposition}[Deterministic uniform approximation]\label{prop:uniform-sequence}
		There exists a homomorphic sofic approximation
		\[
		\Sigma_{\rm unif}=(\sigma_n^{\rm unif}:F_2\to\Sym(V_n))_{n \geq 1}
		\]
		such that
		\[
		\limsup_{n\to\infty}
		\frac{\ind(\cG_{\sigma_n^{\rm unif}})}{|V_n|}
		\le\frac9{20}.
		\]
		Moreover,
		\[
		\liminf_{n\to\infty}
		\frac{\ind(\cG_{\sigma_n^{\rm unif}})}{|V_n|}
		\ge\frac15.
		\]
	\end{proposition}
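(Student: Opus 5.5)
We choose the sequence by a diagonal argument over the random model of Section~\ref{sec5}, using Lemma~\ref{lem:first-moment} with Lemma~\ref{lem:phi-negative} for the upper bound and Lemma~\ref{lem:random-free} for asymptotic freeness. The lower bound $1/5$ will be a deterministic greedy bound valid for every homomorphism $F_2\to\Sym(d)$, up to a correction from looped vertices.

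First, enumerate $F_2\setminus\{e\}$ as $w_1,w_2,\ldots$, with $w_1=a$ and $w_2=b$. Fix $n\ge 2$. For $\sigma$ built from independent uniform $\pi_a,\pi_b\in\Sym(d)$, consider the bad events
\[
B_0=\{\ind(\cG_\sigma)\ge \tfrac{9}{20}d\},\qquad B_i=\{|\Fix(\sigma(w_i))|>d/n\}\quad(1\le i\le n).
\]
By Lemma~\ref{lem:first-moment} and Lemma~\ref{lem:phi-negative}, $\mathbb P(B_0)\to0$ as $d\to\infty$. By Lemma~\ref{lem:random-free}, $\mathbb P(B_i)\to0$ for each fixed $i$. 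Since only the $n+1$ events $B_0,\ldots,B_n$ occur, we can pick $d_n>d_{n-1}$ with $d_n\ge n$ and
\[
\mathbb P(B_0\cup\cdots\cup B_n)<1 .
\]
Some realization $(\pi_a,\pi_b)$ then avoids all the bad events. Let $\sigma_n^{\rm unif}$ be the homomorphism it determines, and set $V_n=[d_n]$. For a fixed $g=w_i\neq e$ and all $n\ge i$, we have $|\Fix(\sigma_n(g))|/|V_n|\le 1/n\to0$. This is exactly \eqref{eq:sofic-free}, so $\Sigma_{\rm unif}$ is a homomorphic sofic approximation. Avoiding $B_0$ gives $\ind(\cG_{\sigma_n})/|V_n|<9/20$ for every $n$, and hence the limsup bound.

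For the liminf, let $L_n=\Fix(\pi_a)\cup\Fix(\pi_b)$ be the set of looped vertices. Since $w_1=a$ and $w_2=b$, we have $|L_n|\le 2d_n/n$. Every vertex outside $L_n$ has at most four distinct neighbours, namely $\pi_a^{\pm1}v$ and $\pi_b^{\pm1}v$. We run the greedy procedure on $V_n\setminus L_n$: repeatedly select a remaining vertex and delete it together with its neighbours. Each step removes at most $5$ vertices, and the selected vertices form an independent set because none of them is looped. Hence
\[
\ind(\cG_{\sigma_n})\ge \frac{d_n-|L_n|}{5}\ge \frac{d_n}{5}\Bigl(1-\frac2n\Bigr),
\]
and so the liminf is at least $1/5$.

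The only delicate point is making the realization deterministic while keeping all properties along the whole sequence. The diagonal scheme handles this: at stage $n$ it controls only finitely many words, with tolerance $1/n$, and it takes a finite union bound whose total probability tends to $0$ as $d\to\infty$. The looped-vertex correction in the greedy bound is the other place that needs care. It is absorbed by including $a$ and $b$ among the controlled words from the first stage on.
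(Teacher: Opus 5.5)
Your proposal is correct and follows the paper's proof essentially step for step. You choose, at stage $n$, a realization of the random pair avoiding the bad independence event and the finitely many fixed-point events with tolerance $1/n$, and then run the greedy bound on the loop-free part of a graph of maximum degree $4$. The only difference is cosmetic: you put $a,b$ first in the enumeration to get the explicit bound $|L_n|\le 2d_n/n$, whereas the paper just invokes soficity to get $|L_n|=o(|V_n|)$.
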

	
	\begin{proof}
		Enumerate the nontrivial elements of $F_2$ as $w_1,w_2,\ldots$.  For each $n$, take $d$ sufficiently large.  By Lemma \ref{lem:first-moment} and Lemma \ref{lem:phi-negative}, with probability tending to one,
		\[
		\ind(\cG_\sigma)<\frac9{20}d.
		\]
		By Lemma \ref{lem:random-free} and a union bound over the finitely many words $w_1,\ldots,w_n$, also with probability tending to one,
		\[
		|\Fix(\sigma(w_j))|<\frac dn
		\qquad(1\le j\le n).
		\]
		For large enough $d$, the intersection of these events with
		\[
		\ind(\cG_\sigma)<\frac9{20}d
		\]
		is nonempty.  Choose one realization and call it $\sigma_n^{\rm unif}$, with $d=|V_n|$ chosen strictly increasing.  The fixed-point estimates imply \eqref{eq:sofic-free}, so the sequence is a homomorphic sofic approximation.  The upper independence bound is built into the choice.
		
		For the lower bound, let $L_n$ be the set of vertices carrying a loop in $\cG_{\sigma_n^{\rm unif}}$.  Such a loop can arise only from a fixed point of $\sigma_n^{\rm unif}(a)$ or $\sigma_n^{\rm unif}(b)$, so $|L_n|=o(|V_n|)$ by soficity.  After deleting $L_n$, the underlying simple graph has maximum degree at most $4$.  The greedy algorithm gives an independent set of size at least
		\[
		\frac{|V_n|-|L_n|}{4+1}.
		\]
		Dividing by $|V_n|$ proves the liminf estimate.
	\end{proof}
	
	\begin{proof}[Proof of Theorem \ref{thm:main}]
		For $\Sigma_{\rm bip}$, Proposition \ref{prop:bip-alpha} and Theorem \ref{thm:model-formula} give
		\[
		\mdim_{\Sigma_{\rm bip}}(X_{\rm hc})=\frac12.
		\]
		For $\Sigma_{\rm unif}$, Proposition \ref{prop:uniform-sequence} and Theorem \ref{thm:model-formula} give
		\[
		\frac15\le\mdim_{\Sigma_{\rm unif}}(X_{\rm hc})\le\frac9{20}.
		\]
		The strict inequality $9/20<1/2$ completes the proof.  Topological mixing was established in Proposition \ref{prop:mixing}.
	\end{proof}

\end{document}